\documentclass[reqno,10pt]{amsart} %leqno is the option to put formula numbers on the left side
\usepackage{amssymb, amscd, verbatim, amsmath, MnSymbol, stmaryrd}
\usepackage{color}

\numberwithin{equation}{section}
\newtheorem{theorem}{Theorem}[section]
\newtheorem{lemma}[theorem]{Lemma}
\newtheorem{corollary}[theorem]{Corollary}

\newtheorem{remark}[theorem]{Remark}

\theoremstyle{definition}

\def\tr{\Delta}
\def\bea{\begin{eqnarray*}}
\def\eea{\end{eqnarray*}}
\def\be{\begin{eqnarray}}
\def\ee{\end{eqnarray}}
\def\a{\alpha}

\def\n{\nabla} 
\def\e{\epsilon} 
\def\o{\omega}
\def\d{\delta} 
\def\vp{\varphi}
\def\o{\omega}
\def\dis{\displaystyle}
\def\hess{{\rm Hess}}
\def\ric{{\rm Ric}}
\def\scal{{\rm Scal}}

\setlength{\textheight}{24 cm}
\setlength{\textwidth}{16.5 cm}

\setlength{\oddsidemargin}{-.10 cm}
\setlength{\evensidemargin}{-.10cm}
\setlength{\topmargin}{-1.2 cm}

\begin{document}

%\begin{frontmatter}

 \title{Structures of  compact static vacuum   spaces and positive isotropic curvature}

\author{Seungsu Hwang}
\address{Department of Mathematics\\ Chung-Ang University\\
84 HeukSeok-ro DongJak-gu \\ Seoul 06974, Republic of Korea.
}
\email{seungsu@cau.ac.kr}

 \author{Gabjin Yun$^*$}
 %\author{Gabjin Yun}
\address{Department of Mathematics\\  Myong Ji University\\
116 Myongji-ro Cheoin-gu\\ Yongin, Gyeonggi 17058, Republic of Korea. }
\email{gabjin@mju.ac.kr} 

\thanks{$*$ Corresponding author}
%\thanks{The first author was supported by the Basic Science Research Program through the National Research Foundation of Korea(NRF)  funded by the Ministry of Education(NRF-2018R1D1A1B05042186), and the second and corresponding author by the Ministry of Education(NRF-2019R1A2C1004948).}

\keywords{static vacuum  space, positive isotropic curvature, 
harmonic curvature, Einstein metric,  warped product manifold}
   
\subjclass{Primary 53C25; Secondary 53C20}

 \maketitle

 \begin{abstract}
In this paper, we study  geometric structures of compact static vacuum spaces  and function theoretic properties for the potential functions satisfying the static vacuum equation. In particular, we investigate some geometric conditions  that  static vacuum spaces    are warped product  and   Bach flat.  As an application, we prove that if a triple $(M^n, g, f), n \ge 4$, is a compact  static vacuum space satisfying $\omega:=df \wedge i_{\n f}{\mathring {\rm Ric}} = 0$, then $M$ is either isometric to a round sphere  or a warped product of a circle with a compact Einstein manifold of positive Ricci curvature, up to finite cover.
Furthermore, if $(M, g)$ has  positive isotropic curvature, then
  $M$ is either isometric to a round sphere or a product ${\Bbb S}^1 \times {\Bbb S}^{n-1}$.
  \end{abstract}

%\end{frontmatter}

\setlength{\baselineskip}{16pt}

\section{Introduction}

An $n$-dimensional complete Riemannian manifold $(M,g)$ is said to be a {\it static space} with 
a perfect fluid if there exists a smooth non-trivial function $f$ on $M$ satisfying 
\be 
\hess f -\left({\rm Ric} -\frac {{\rm Scal}}{n-1}g\right) f =\frac 1n \left( \frac {\scal}{n-1}\,f +\tr f\right)g,
\label{eqn1-1}\ee
where $\hess f$ is the Hessian of $f$,  $\ric$ is the Ricci tensor of $g$ with its scalar curvature 
$\scal$,  and $\tr f$ is the (negative) Laplacian of $f$. In particular, if 
\be
\tr f =-\frac {\scal}{n-1}\, f, \label{eqn2018-9-5-1}
\ee
$(M,g)$ is said to be a {\it static vacuum space}. In this case, the equation (\ref{eqn1-1}) reduces to 
\be 
\hess f - \left(\ric -\frac {\scal}{n-1}g\right) f =0, \label{static1}
\ee
which is called the {\it static vacuum equation}. Using the trace-less Ricci tensor $\mathring{\ric} = \ric - \frac{\scal}{n}g$,
the static vacuum equation (\ref{static1}) can be written as
\be
f {\mathring{\ric}} = \hess f + \frac{\scal}{n(n-1)} fg.\label{eqn2020-1-7-11}
\ee
It is easy to see that  if a smooth function $f$ on $M$ is a solution of the 
static vacuum equation (\ref{static1}), then $f$ 
 is an element of the kernel space, $\ker s_g'^*$, of  the operator $s_g'^*$, where
 $s_g'^*$ is the $L^2$-adjoint operator of the linearized scalar curvature $s_g'$
 with respect to the metric $g$ which is given by
\bea
s_g'^*(\phi)= \hess \, \phi-(\tr \phi)g - \phi\, {\ric}  %\label{eqn2016-9-5-4}
\eea
for any smooth function $\phi$ on $M$ (cf. \cite{Be}).

By taking the divergence of (\ref{static1}), we have $\frac 12 f\, d\scal=0$,  which shows the scalar curvature  of $(M, g)$ is constant since $f$ is a nontrivial function.  When $M$ is compact,  it is  known \cite{Bour} that a compact static vacuum space is either isometric to a Ricci-flat  manifold with $\ker s_g'^* ={\Bbb R}\cdot 1$ and $\scal=0$, or the scalar curvature $\scal$ is a  strictly positive constant and $\frac{\scal}{n-1}$ is an eigenvalue of the Laplacian. In particular, if $f$ is s nontrivial solution of the static vacuum equation, then the scalar curvature is  a positive constant.  
In case of compact static vacuum spaces, if the potential function $f$ is nonnegative, it follows from (\ref{eqn2018-9-5-1}) that $f$ should be constant   which contradicts non-triviality of $f$. Thus, we may assume that $\min_M f <0$. With this assumption,  it is well-known that each connected component of the zero set $f^{-1}(0)$ of the potential function $f$ is  a totally geodesic hypersurface. 
Also, it turns out  \cite{corv} that the warped product manifold  $(M\times_{f} {\mathbb R}, g\pm f^2 dt^2)$ is Einstein when $f \in \ker s_g'^*$ .

When  the  static space  $(M, g)$ is locally conformally flat,  Kobayashi and Obata  \cite{k-o} 
 proved that, around the hypersurface  $f^{-1}(c)$ for a regular value $c$, 
the metric $g$ is isometric to a warped product metric of constant curvature.
For static vacuum spaces, Fischer and Marsden   \cite{f-m} conjectured that 
only compact static vacuum
 space is a standard sphere, but it is known that besides flat torus
 and round spheres, product ${\Bbb S}^1\times {\Bbb S}^{n-1}$ and warped product 
${\Bbb S}^1\times_\xi {\Bbb S}^{n-1}$ are also compact static vacuum spaces
 (for references, see \cite{eji},  \cite{kob}, \cite{laf},  \cite{shen}) for locally conformally flat case.  
 Furthermore, it is known  \cite{QY} that  
 ${\Bbb S}^1 (\frac{1}{\sqrt{n-2}}) \times \Sigma^{n-1}$ for Einstein manifold $\Sigma$ with
 scalar curvature $(n-1)(n-2)$ is a compact static vacuum space, which may not be  locally 
conformally flat.

Some rigidity results related to static vacuum spaces have been found. For example,  Kim and Shin  \cite{ks} proved a local 
classification of  four-dimensional  static vacuum spaces with harmonic curvature. We say that a Riemannian manifold $(M, g)$ has harmonic curvature if ${\rm div}\,  R = 0$, or equivalently, that the Ricci tensor  is a Codazzi tensor.
On the other hand, Qing and Yuan showed  \cite{QY} that, if $(M,g,f)$ is a Bach-flat static vacuum 
 space with compact level sets of $f$,  then it is either Ricci-flat, isometric to 
${\Bbb S}^n$,  ${\Bbb H}^n$, or  a warped product space. Recall that a Riemannian manifold $(M^n, g), n \ge 4$, is said to be Bach-flat if
 the Bach tensor  (see Section 2 for definition)  vanishes.
 The Bach tensor  discussed first by Bach in \cite{bach} is deeply related  to general relativity and conformal geometry (cf. \cite{lis}), and
in  dimension $n=4$, it is well known \cite{Be} that the Bach tensor 
is conformally invariant and arises as a gradient of the total Weyl curvature  functional. In this article, we prove that a compact static vacuum space whose Ricci curvature is vanishing in the direction of gradient of potential function must be Bach-flat.

 \begin{theorem} \label{lem2025-3-11-1}
Let $(M^n,  g, f), n\ge 4, $ be a compact static vacuum space of dimension $n$  satisfying  $\o = df\wedge i_{\n f}\mathring{\rm Ric} =0$.  
 Then  $(M, g)$ is Bach-flat. Here, $i_{\n f}$ denotes the  interior product.
 \end{theorem}

As an application of Theorem~\ref{lem2025-3-11-1} together with Qing and Yuan's result, we can show the following rigidity result for compact
vacuum spaces.

 \begin{theorem} \label{lem2020-1-7-16-02}
Let $(M^n, g,f)$ be a compact static vacuum space of dimension $n$  satisfying  $\o = df\wedge i_{\n f}\mathring{\rm Ric} =0$.  
 Then  up to finite cover, either $M$ is isometric to ${\Bbb S}^{n}$ or a product ${\Bbb S}^1 \times {\Sigma}^{n-1}$, where $\Sigma$ is a compact Einstein manifold of positive Ricci curvature.
\end{theorem}

We would like to mention \cite{ych} that if a static vacuum space $(M ,g, f)$ has harmonic curvature, then $\o$ must  vanish.
In other words, our results on compact static vacuum spaces can be considered as a generalization of this result.

In the  last section of this paper, we study  static vacuum spaces having positive isotropic curvature.
Let $(M^n, g)$ be an $n$-dimensional Riemannian manifold with $n \ge 4$. 
The Riemannian metric $g = \langle \,\, ,\, \rangle$ can be extended  either
to a {\it complex bilinear form} $(\,\, , \, )$ or a {\it Hermitian inner product} 
$\llangle\,\,, \, \rrangle$ on each complexified tangent space  $T_pM \otimes {\Bbb C}$ for $p \in M$.
A complex $2$-plane $\sigma \subset T_pM\otimes {\Bbb C}$ is {\it totally isotropic}
 if $ (Z, Z) = 0$ for any  $Z \in \sigma$. For any $2$-plane $\sigma \subset T_pM\otimes {\Bbb C}$, we can define the
 complex sectional curvature of $\sigma$ with respect to $\llangle  \,  , \, \rrangle$ by
 \be
 {\rm K}_{\Bbb C}(\sigma) = \llangle {\mathcal R}(Z \wedge W), Z\wedge W\rrangle,
 \label{eqn2019-5-17-1}
 \ee
 where $\mathcal R : \Lambda^2 T_pM \to \Lambda^2 T_pM$ is the curvature operator and 
 $\{Z, W\}$ is a unitary basis for $\sigma$ with respect to $\llangle \,  , \, \rrangle$.
 
  A Riemannian $n$-manifold $(M^n, g)$ is said to have 
 {\it positive isotropic curvature} {\rm(PIC in short)} if
  the compex sectional curvature on isotropic planes is positive, that is,
 for any totally isotropic $2$-plane $ \sigma \subset T_pM\otimes {\Bbb C}$,
 \be
 {\rm K}_{\Bbb C}(\sigma) >0. \label{eqn2018-4-22-1}
 \ee
 In view of (\ref{eqn2019-5-17-1}), the condition (\ref{eqn2018-4-22-1})
  can be formulated in purely real terms.  For orthonormal  vectors
  $ \{e_1 , e_2, e_3,  e_4\}$   in $T_pM$ with 
  $\sigma = {\rm span}\{e_1+ie_2, e_3 +ie_4\}$, letting
$$
Z = e_1 + ie_2, \quad W= e_3+ie_4,
$$
we have
 \bea
2 {\rm K}_{\Bbb C}(\sigma) =
 \llangle  {\mathcal R}(Z\wedge W), Z\wedge W\rrangle
 = R_{1313} +R_{1414} + R_{2323} + R_{2424} - 2R_{1234}.%\label{eqn2018-4-23-1}
 \eea
 Hence $(M^n, g)$ has {PIC} if and only if for any four orthonormal frame 
 $\{e_1, e_2, e_3, e_4\}$,
 \bea
 R_{1313} +R_{1414} + R_{2323} + R_{2424} >  2R_{1234}. %\label{eqn2018-4-23-1}
 \eea

 If $(M, g)$ has positive curvature operator, then it has PIC  \cite{mm88}. 
Thus, a standard sphere $({\Bbb S}^n, g_0)$ has PIC. 
Also, if the sectional curvature of $(M, g)$  is pointwise strictly quater-pinched, then  
$(M, g)$ has PIC \cite{mm88}.
  It is well-known that the product metric on ${\Bbb S}^{1}\times {\Bbb S}^{n-1}$ has also
  PIC and  the connected sum of manifolds with PIC admits a PIC metric \cite{m-w}.
  For existence of Riemannian metrics with PIC on compact manifolds which  fiber
  over the circle, see \cite{lab}.
  On the other hand, PIC implies that $g$ has positive scalar curvature \cite{m-w}.
Recently, there are many known results on the structure of Riemannian manifolds 
with positive isotropic curvature (\cite{bren}, \cite{c-hu}, \cite{ctz}, \cite{fra}, \cite{f-w}, \cite{sea}, \cite{ses} and references are therein).

Related to static vacuum spaces having PIC, we  have the following.

\begin{theorem}\label{cor2023-4-22-1-02}
Let $(M^n, g, f), n \ge 4, $  be a $n$-dimensional compact static vacuum space of positive isotropic curvature such that $\o = df\wedge i_{\n f} \mathring{\rm Ric} = 0$. Then $M$ is isometric to a sphere ${\Bbb S}^n$  or a  product ${\Bbb S}^1\times {\Bbb S}^{n-1}$,  up to finite cover.
\end{theorem}

Throughout this paper, we follow conventions in \cite{Be} on curvatures and geometric differential operators like divergence 
except  the Laplace operator. And, hereafter,  for convenience and simplicity, we denote curvatures $\ric,  {\mathring \ric},  \scal$, and   $\hess f$  by $r, z, s$, and $Ddf$, respectively, if there are no ambiguities.

\section{Preliminaries}

In this section, we  introduce a structural $3$-tensor $T$  on a static vacuum space $(M^n, g, f)$ using the potential function $f$ and the trace-less Ricci tensor $z$. It has two terms consisting of wedge product of 1-forms and symmetric 2-tensors. Extracting 1-forms from each term contained in this tensor,   we will make a 2-form $\omega$ by using wedge product. This $2$-form together with $T$ plays important roles in the proofs of our main results.

Let $(M, g, f)$  be a static vacuum space satisfying (\ref{static1}) or equivalently (\ref {eqn2020-1-7-11}).
We  define  a $3$-tensor $T$ on $(M, g, f)$  by  
\bea
T= \frac 1{n-2}\, df  \wedge z +\frac 1{(n-1)(n-2)}\, i_{\nabla f }z \wedge g. %\label{defnt}
\eea
As mentioned in Theorem~\ref{lem2025-3-11-1}, $i_{\n f}$ denotes the usual interior product to the first factor defined by
$i_{\n f}z(X)  = z(\n f, X)$ for any vector $X$, and for a $1$-form $\eta$ and a symmetric $2$-form $b$, $\eta\wedge b$ is defined as
$$
\eta\wedge b(X, Y, Z) = \eta(X) b(Y, Z) - \eta(Y)b(X, Z)
$$
for any vector fields $X, Y$ and $Z$. From definition, one can see that  the cyclic summation of $T_{ijk}$ is always vanishing. 
In \cite{QY}, Qing and Yuan used a very similar tensor as the tensor $T$ to classify static vacuums spaces with Bach flat structure. 
The tensor $T$ also looks very similar as in \cite{CC} (cf. \cite{k-o}), where the authors used this similar tensor to  classify complete Bach flat gradient shrinking Ricci solitons.
The norm of $T$ is given in the following form (\cite{h-y},  \cite{GH1}).

\begin{lemma}\label{lem2019-5-28-5}
Let $(g, f)$ be a non-trivial solution of the static vacuum equation, Then, with $N = \frac{\n f}{|\n f|}$,
$$ 
|T|^2= \frac 2{(n-2)^2}|\nabla f|^2 \left( |z|^2-\frac n{n-1}|i_Nz|^2\right).
$$
\end{lemma}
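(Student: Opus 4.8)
The plan is to obtain the formula by a direct expansion of $|T|^2$, since $T$ is assembled from the two wedge products $df\wedge z$ and $(i_{\n f}z)\wedge g$. Where $\n f=0$ both sides vanish, so I may assume $\n f\neq 0$ and set $N=\n f/|\n f|$, so that $i_Nz=\frac{1}{|\n f|}\,i_{\n f}z$ and hence $|i_{\n f}z|^2=|\n f|^2|i_Nz|^2$. Writing $\omega=df$, $\eta=i_{\n f}z$, $a=\frac{1}{n-2}$ and $b=\frac{1}{(n-1)(n-2)}$, I have $T=a\,\omega\wedge z+b\,\eta\wedge g$, so
\[
|T|^2=a^2|\omega\wedge z|^2+2ab\,\langle\omega\wedge z,\eta\wedge g\rangle+b^2|\eta\wedge g|^2,
\]
and it remains only to evaluate these three inner products.

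The computational heart is the following algebraic identity for $1$-forms $\alpha,\beta$ and symmetric $2$-tensors $P,Q$, which I would establish first by expanding in a local orthonormal frame $\{E_i\}$ and using that $\alpha\wedge P$ is antisymmetric in its first two arguments:
\[
\langle\alpha\wedge P,\beta\wedge Q\rangle=2\langle\alpha,\beta\rangle\langle P,Q\rangle-2\,(P\circ Q)(\beta^{\sharp},\alpha^{\sharp}),
\]
where $\sharp$ denotes the metric dual and $(P\circ Q)(X,Y)=\sum_i P(X,E_i)Q(E_i,Y)$ is the $(0,2)$-tensor associated with the composition of the corresponding endomorphisms. In the frame expansion the four index contractions collapse: two of them give the trace term $\langle\alpha,\beta\rangle\langle P,Q\rangle$ and the other two combine into the single composition term $(P\circ Q)(\beta^{\sharp},\alpha^{\sharp})$. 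This is the only place where care with signs and with the order of contractions is genuinely needed.

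Applying the identity three times is then routine. For $P=Q=z$ and $\alpha=\beta=\omega$ I get $|\omega\wedge z|^2=2|\n f|^2|z|^2-2\,z^2(\n f,\n f)$, and since $z^2(\n f,\n f)=\sum_i z(\n f,E_i)^2=|i_{\n f}z|^2$ this equals $2|\n f|^2|z|^2-2|i_{\n f}z|^2$. For $P=Q=g$ and $\alpha=\beta=\eta$, using that $g$ acts as the identity endomorphism and $\langle g,g\rangle=n$, I obtain $|\eta\wedge g|^2=2n|\eta|^2-2|\eta|^2=2(n-1)|i_{\n f}z|^2$. For the cross term the first summand carries the scalar factor $\langle z,g\rangle=\operatorname{tr}z=0$ because $z$ is trace-free, which is the key structural simplification, while the composition summand gives $\langle\omega\wedge z,\eta\wedge g\rangle=-2\,z^2(\n f,\n f)=-2|i_{\n f}z|^2$.

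Finally I would substitute these three values into the expansion, factor out $\frac{2}{(n-2)^2}$, and collect the coefficient of $|i_{\n f}z|^2$, which becomes $-1-\frac{2}{n-1}+\frac{1}{n-1}=-\frac{n}{n-1}$, so that
\[
|T|^2=\frac{2}{(n-2)^2}\Big(|\n f|^2|z|^2-\frac{n}{n-1}|i_{\n f}z|^2\Big).
\]
Replacing $|i_{\n f}z|^2=|\n f|^2|i_Nz|^2$ then yields the stated identity. I do not expect a genuine obstacle here: the proof is elementary once the wedge inner-product identity is available, and the main points requiring attention are the frame expansion establishing that identity and the bookkeeping of the coefficients $a^2,2ab,b^2$; the decisive structural input is simply that $z$ is trace-free, which annihilates what would otherwise be the leading term of the cross product.
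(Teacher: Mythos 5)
Your computation is correct: the wedge inner-product identity, the three evaluations (with $\operatorname{tr}z=0$ killing the leading cross term), and the coefficient bookkeeping all check out against the convention $(\alpha\wedge P)(X,Y,Z)=\alpha(X)P(Y,Z)-\alpha(Y)P(X,Z)$ used implicitly in the paper's definition of $C$ and $T$. The paper itself states this lemma without proof, citing \cite{h-y} and \cite{GH1}, where the argument is exactly this direct orthonormal-frame expansion, so your route is essentially the intended one.
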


The tensor $T$ is deeply related to the Bach tensor and the Cotton tensor. First of all, we here present the Bach tensor and the Cotton tensor for self-containedness. 
Let $(M^n, g)$ be a Riemannian manifold of dimension $n$ with the Levi-Civita  connection $D$. We start with a differential operator acting on the space  of symmetric $2$-tensors. Let $b$ be a symmetric $2$-tensor on $M$. The covariant differential $d^Db$ is defined by 
$$ 
d^Db(X, Y, Z) = D_Xb(Y, Z) - D_Yb(X, Z) 
$$
 for any vectors $X, Y$ and $Z$. The {\it Cotton tensor} 
 $C \in\Gamma(\Lambda^2 M \otimes T^*M)$ is defined by 
 \bea
 C = d^D \left(r - \frac{s}{2(n-1)} g\right)  = d^Dr - \frac{1}{2(n-1)} ds \wedge g. %\label{eqn2017-4-1-1}
 \eea 
  It is well-known that, for $n=3$, 
$C=0$ if and only if $(M^3, g)$ is locally conformally flat.
 Moreover, for $n \ge 4$, the vanishing of Weyl
tensor $\mathcal W$ implies the vanishing of the Cotton tensor $C$, while
$C=0$ corresponds to the Weyl tensor being harmonic, i.e, $\d \mathcal W
= 0$, where $\d$ denotes the negative divergence operator. In fact, we have  the following identity  \cite{Be}: for a
Riemannian $n$-manifold $(M^n, g)$, the Weyl tensor $\mathcal W$ satisfies 
\be 
\d \mathcal W = - \frac{n-3}{n-2}d^D \left(r - \frac{s}{2(n-1)} g\right) 
= - \frac{n-3}{n-2} C\label{eqn2016-12-3-16} 
\ee 
under the following identification
$$ 
\Gamma(T^*M\otimes \Lambda^2M) \equiv \Gamma(\Lambda^2M \otimes T^*M). 
$$

\vspace{.13in}
  The {\it Bach tensor} $B$ on a Riemannian manifold $(M^n, g)$, $n \ge 4$,
  is  defined by 
\bea
B = \frac{1}{n-3}\d^D \d \mathcal W + \frac{1}{n-2}{\mathring {\mathcal W}}r.\label{eqn2018-2-11-2} 
\eea
Here $\d^D$ is the adjoint operator of $d^D$. 
For a symmetric $2$-tensor $b$ on a Riemannian manifold $(M^n, g)$, 
we define ${\mathring {\mathcal W}}b$ by
$$ 
{\mathring {\mathcal W}}b(X, Y) = \sum_{i=1}^n b(\mathcal W (X, E_i)Y, E_i) 
= \sum_{i, j=1}^n \mathcal W (X, E_i, Y, E_j)b(E_i, E_j) 
$$ 
for any local frame $\{E_i\}$.
From (\ref{eqn2016-12-3-16}), we have 
\bea
C = - \frac{n-2}{n-3} \d \mathcal W\quad \mbox{and}\quad \d C
 = - \frac{n-2}{n-3}\d^D \d \mathcal W.\label{candw} 
\eea
Thus, one can see that the Bach tensor satisfies 
\bea
B = \frac{1}{n-2}\left(-\d C + {\mathring {\mathcal W}}z\right). %\label{bach302} 
\eea
Finally, we have the following identity \cite{h-y}:
\be
f C = {\tilde i}_{\n f} \mathcal W - (n-1)T,\label{eqn2017-6-12-10-1}
\ee
where   $\tilde i_{\n f}$ denotes the interior product to the last component so that   
${\tilde i}_{\n f} \mathcal W(X, Y, Z) = \mathcal W(X, Y, Z, \n f)$. 

The following result can be obtained from a  result of Qing and Yuan in \cite{QY}.

\begin{theorem}\label{thm2018-1-20-11}
 Let $(M^n, g, f)$ be a compact static vacuum space with nontrivial potential function $f$.  If $T = 0$, then, up to a finite cover and appropriate scaling,    $M$ is either isometric to a sphere ${\Bbb S}^n$, or  a warped product, ${\Bbb S}^1\times_\xi \Sigma^{n-1}$, 
 $g = dt^2 + \xi(t)^2 g_\Sigma$,   of a circle and a compact Einstein manifold $\Sigma$ with positive scalar  curvature.
\end{theorem}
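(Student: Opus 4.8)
The plan is to read $T=0$ as a pointwise constraint on the traceless Ricci tensor $z$, to upgrade it to a warped product structure by means of the static equation, and then to deduce Bach-flatness so that the classification of Qing and Yuan in \cite{QY} applies. Since $f$ is non-trivial and $s$ is a positive constant with no critical points of $f$ on $f^{-1}(0)$, the set $M_0 = \{\,|\n f|>0\,\}$ is open and dense, and I argue there. By Lemma \ref{lem2019-5-28-5}, on $M_0$ the hypothesis $T=0$ is equivalent to $|z|^2 = \frac{n}{n-1}|i_N z|^2$, where $N = \n f/|\n f|$. Writing $z$ in an orthonormal frame $\{N, e_2,\dots,e_n\}$ adapted to the level sets and splitting off the trace-free part of $z|_{N^\perp}$, this equality becomes a sum of squares whose vanishing forces
\[
z(N, e_a)=0 \quad (a\ge 2), \qquad z(e_a,e_b) = -\frac{z(N,N)}{n-1}\,\delta_{ab}\quad (a,b\ge 2).
\]
Thus $(M,g)$ is \emph{radially flat}, i.e. $\n f$ is an eigenvector of the Ricci tensor, and $z$ restricted to the level sets of $f$ is pure trace.

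Next I insert this structure into the vacuum static equation (\ref{eqn2020-1-7-11}), $fz = Ddf + \frac{sf}{n(n-1)}g$. On $M_0$ it gives $Ddf(N,e_a)=0$ and $Ddf(e_a,e_b)=\lambda\,\delta_{ab}$ for a single function $\lambda$, so the integral curves of $N$ are geodesics and the regular level sets are totally umbilic with a pointwise constant principal curvature. These are precisely the conditions under which, around each regular level set, $g$ is a warped product $g=dt^2+\xi(t)^2 g_\Sigma$ with $t$ a reparametrization of $f$ and $\Sigma$ a level set; radial flatness together with the pure-trace property then forces $(\Sigma,g_\Sigma)$ to be Einstein. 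In particular the radial curvature $(X,Y)\mapsto R(X,N,Y,N)$ is pure trace on $N^\perp$.

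To apply \cite{QY} I verify that $B=0$. Because $T=0$ we have $\d T=0$, so identity (\ref{eqn2020-9-12-1}) collapses to $(n-2)fB = -\,i_{\n f}C + \frac{n-3}{n-2}\widetilde C$, while (\ref{eqn2017-6-12-10-1}) reduces to $fC = \tilde{i}_{\n f}\mathcal W$. Substituting the latter and using the symmetries of the Weyl tensor, both $i_{\n f}C$ and $\widetilde C$ are multiples of the single symmetric $2$-tensor $W(X,Y)=\mathcal W(X,\n f,Y,\n f)$, so that $f^2 B$ is a nonzero constant multiple of $W$. Now $W$ vanishes whenever one argument is $\n f$, and on $N^\perp$ it is pure trace: its Schouten part is pure trace because $z$ is radially flat and pure trace there, and its curvature part $R(\cdot,N,\cdot,N)$ is pure trace by the previous paragraph. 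But $W$ is also trace-free, since the Weyl tensor is totally trace-free. A symmetric $2$-tensor that is simultaneously pure trace and trace-free must vanish, so $W\equiv 0$ on $M_0$ and hence $f^2B=0$; as $\{f\ne0\}$ is dense, $B\equiv 0$ on all of $M$ by continuity.

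Finally, $M$ compact makes the level sets of $f$ compact, so the Bach-flat classification of Qing and Yuan applies \cite{QY}: up to a finite quotient and scaling, $(M,g)$ is Ricci-flat, or isometric to ${\Bbb S}^n$, ${\Bbb H}^n$, or a warped product ${\Bbb S}^1\times_\xi \Sigma^{n-1}$. The standing hypothesis $s>0$ excludes the Ricci-flat and hyperbolic alternatives, leaving exactly ${\Bbb S}^n$ or ${\Bbb S}^1\times_\xi \Sigma$ with $\Sigma$ a compact Einstein manifold of positive constant Ricci curvature, which is the assertion. I expect the genuine difficulty to lie not in the tensorial algebra but in this last, global, step: passing from the warped product picture valid only on the regular set $M_0$ to a statement about the whole of $M$ requires analyzing the critical set of $f$ and the topology near it, a single critical point closing up to give ${\Bbb S}^n$ and a critical hypersurface (or periodic $f$) producing the circle factor. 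This is exactly what is supplied by \cite{QY}, so the substantive task on our side is the verification of Bach-flatness, whose crux is the vanishing of the radial Weyl curvature $\mathcal W(\cdot,\n f,\cdot,\n f)$.
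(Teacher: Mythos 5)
Your proposal has the same two--step skeleton as the paper's proof --- establish Bach-flatness from $T=0$, then invoke the Qing--Yuan classification of compact Bach-flat vacuum static spaces and use $s>0$ and compactness to discard the Ricci-flat and hyperbolic alternatives --- but the paper's entire content for the first step is the citation ``it is proved in \cite{h-y} that $T=0$ implies $B=0$,'' whereas you supply an actual derivation. Your derivation is essentially sound: extracting $z(N,e_a)=0$ and $z(e_a,e_b)=-\tfrac{z(N,N)}{n-1}\delta_{ab}$ from Lemma~\ref{lem2019-5-28-5} is a correct sum-of-squares argument; with $T=0$ the identities (\ref{eqn2020-9-12-1}) and (\ref{eqn2017-6-12-10-1}) do give $(n-2)f^2B=\tfrac{2n-5}{n-2}\,\mathcal W(\cdot,\nabla f,\cdot,\nabla f)$ with $2n-5>0$; and a tensor on $N^{\perp}$ that is both pure trace and trace-free vanishes. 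Two points are asserted rather than proved. First, umbilicity of the level sets with ``a pointwise constant principal curvature'' does not by itself yield a warped product: you need the umbilic factor (equivalently $\alpha=z(N,N)$) to be constant \emph{along} each level set. This is exactly the paper's Lemma~\ref{lem2020-1-7-10}, and it also follows in one line from the Codazzi equation: tracing $(D_Xh)(Y,Z)-(D_Yh)(X,Z)=-R(X,Y,Z,N)$ with $h=Hg_{\Sigma}$ gives $(n-2)X(H)=-{\rm Ric}(X,N)=0$ for $X\perp N$ by radial flatness. (Alternatively, you can bypass the warped product entirely: combining $R(X,Y)\nabla f=(D_XA)(Y)-(D_YA)(X)$ with $A=fz^{\sharp}-\tfrac{sf}{n(n-1)}{\rm Id}$ and $C=d^Dz$ expresses $R_N+\mathcal W_N$ as pure trace on $N^{\perp}$, which together with the curvature decomposition forces $\mathcal W_N=0$; this is the content of Lemma~\ref{lem2018-8-15-4}.) Second, the final density argument needs $\{f\neq 0,\ \nabla f\neq 0\}$ to be dense, which holds by unique continuation for $\Delta f+\tfrac{s}{n-1}f=0$ and by the constancy of $|\nabla f|$ on level sets, but should be stated. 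With these two remarks supplied, your argument is a legitimate self-contained replacement for the external citation, at the cost of being longer; the paper's version buys brevity by outsourcing precisely the step you identify as the crux.
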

\begin{proof} 
First of all, note that $(M, g)$ has   positive constant scalar curvature. It is proved in \cite{h-y} that a static vacuum space  $(M^n, g, f)$ with $T=0$ is Bach-flat. The conclusion follows from  a result due to Qing and Yuan \cite{QY}. In dimension $3$, we also have $C=0$, which implies the same result.
\end{proof}

Next, we will introduce a special type of $2$-form on a static vacuum space. Let $(g, f)$ be a non-trivial solution of the static vacuum equation on an $n$-dimensional compact manifold $M$.
Considering $i_{\nabla f} z$ as a $1$-form,  we define a $2$-form  $\omega$ by 
$$
\omega = df \wedge i_{\nabla f}z.
$$ 

\begin{lemma}\label{lem2018-4-30-20}
 We have
 \be
 \omega = (n-1) {\tilde i}_{\n f} T = - f {\tilde i}_{\n f}C.\label{keyeqn}
 \ee
 Here ${\tilde i}_{\n f} T(X, Y) = T(X, Y, \n f)$ and ${\tilde i}_{\n f}C$ is defined similarly.
 \end{lemma}
  \begin{proof}
It follows from the  definition of $T$ that
 $$
 (n-2) T(X, Y, \nabla f)
=\frac {n-2}{n-1}\, df\wedge i_{\nabla f} z(X,Y)=\frac {n-2}{n-1}\, \omega (X,Y)
$$
for vectors $X$ and $Y$. The second equality follows from (\ref{eqn2017-6-12-10-1}). 
 \end{proof}
 
 \begin{lemma}\label{simple}
 Let  $\{E_1, E_2,  \cdots, E_n\}$ be a local frame with $E_1=N = \frac{\n f}{|\n f|}$.
Then
$$
\omega =0 \quad \mbox{if and only if}\quad \tilde{i}_{\nabla f}C(N, E_j)=0\quad (j \ge 2).
$$
\end{lemma}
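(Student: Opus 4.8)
The plan is to play off the two expressions available for the $2$-form $\omega$: its defining formula $\omega = df \wedge i_{\n f}z$ and the identity $\omega = -f\,\tilde{i}_{\n f}C$ supplied by Lemma~\ref{lem2018-4-30-20}. Working in the orthonormal frame $\{E_1 = N, E_2, \dots, E_n\}$, I would first note that $df = |\n f|\,E_1^\flat$, so that
\[
\omega = |\n f|\, E_1^\flat \wedge i_{\n f}z .
\]
Because one leg of this wedge is proportional to $E_1^\flat$, only the ``mixed'' components of $\omega$ can be nonzero: a direct evaluation gives $\omega(E_i, E_j) = 0$ whenever $i, j \ge 2$, while for $j \ge 2$,
\[
\omega(N, E_j) = |\n f|\, (i_{\n f}z)(E_j) = |\n f|\, z(\n f, E_j).
\]
Hence $\omega = 0$ if and only if $\omega(N, E_j) = 0$ for every $j \ge 2$, the remaining components carrying no information.

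Next I would feed the identity of Lemma~\ref{lem2018-4-30-20} into precisely these surviving components. Since $\omega = -f\,\tilde{i}_{\n f}C$ and $\tilde{i}_{\n f}C(Y,Z) = C(Y, Z, \n f)$ is antisymmetric in its first two slots, evaluation on $(N, E_j)$ yields
\[
\omega(N, E_j) = -f\,\tilde{i}_{\n f}C(N, E_j), \qquad j \ge 2 .
\]
Combining this with the previous step reduces the lemma to the equivalence of $f\,\tilde{i}_{\n f}C(N, E_j) = 0$ (all $j \ge 2$) with $\tilde{i}_{\n f}C(N, E_j) = 0$ (all $j \ge 2$). The implication ``$\Leftarrow$'' is immediate, since the factor $f$ then kills nothing new: if $\tilde{i}_{\n f}C(N, E_j)$ vanishes, so does $\omega(N, E_j)$, and therefore $\omega = 0$.

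The one delicate point, which I expect to be the main obstacle, is the reverse direction on the zero set $f^{-1}(0)$, where $f$ cannot simply be divided out of $f\,\tilde{i}_{\n f}C(N, E_j) = 0$. To deal with it I would invoke that $(g,f)$ is a \emph{non-trivial} solution, so $f$ has no critical points on $f^{-1}(0)$; hence $f^{-1}(0)$ is a regular hypersurface with empty interior and $\{f \ne 0\}$ is open and dense on the region where $N = \n f/|\n f|$ is defined. On $\{f \ne 0\}$ the hypothesis $\omega = 0$ gives $\tilde{i}_{\n f}C(N, E_j) = 0$ after cancelling $f$, and since $\tilde{i}_{\n f}C(N, E_j)$ is continuous, it must vanish identically by density. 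This closes both implications and completes the proof.
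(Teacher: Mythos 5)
Your proof is correct and follows the route the paper itself intends (the paper omits the proof, deferring to \cite{h-y2}, but the chain through Lemma~\ref{lem2018-4-30-20} and the frame decomposition of $\omega = df\wedge i_{\nabla f}z$ is exactly the natural one). The only genuine subtlety --- that $\omega = -f\,\tilde{i}_{\nabla f}C$ cannot be divided by $f$ on $f^{-1}(0)$ --- you identify and resolve correctly, since $f^{-1}(0)$ is a regular level set (the paper notes $f$ has no critical points there), so $\{f\neq 0\}$ is dense and continuity of $\tilde{i}_{\nabla f}C(N,E_j)$ on the complement of ${\rm Crit}(f)$ finishes the argument.
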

\begin{proof}
It follows from the definition of $\omega$ that
\bea
\omega (E_j, E_k)=0\quad\mbox{for all $j, k \ge 2$}, %\label{eqn2019-5-26-1}
\eea
which shows, by Lemma~\ref{lem2018-4-30-20}, 
\bea
\tilde{i}_{\nabla f}C(E_j, E_k)=0\label{simple-2}
\eea
for $2\leq j,k\leq n$.  Therefore, it is easy to see that
$$
\omega =0 \quad \mbox{if and only if}\quad \tilde{i}_{\nabla f}C(N, E_j)=0
$$
for $2\leq j\leq n$.
\end{proof}

\begin{lemma} \label{lem191} 
As a $2$-form, we have the following 
$$  
\tilde{i}_{\nabla f}C= di_{\nabla f}z.
$$
\end{lemma}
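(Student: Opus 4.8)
The plan is to verify the identity by expanding both sides against an arbitrary pair of vector fields $X, Y$ and matching the resulting expressions. The central simplification, which I would invoke at the outset, is that the scalar curvature $s$ is constant on a vacuum static space (as recalled in the introduction, taking the divergence of the static equation forces $ds = 0$). Consequently the term $\frac{1}{2(n-1)} ds \wedge g$ in the definition (\ref{eqn2017-4-1-1}) of $C$ drops out, and since $r = z + \frac{s}{n} g$ with $s$ constant we have $C = d^D r = d^D z$. Evaluating the interior product in the last slot, directly from $d^D z(X, Y, Z) = D_X z(Y, Z) - D_Y z(X, Z)$ together with the symmetry of $z$, gives
$$
\tilde{i}_{\n f} C(X, Y) = C(X, Y, \n f) = (D_X z)(\n f, Y) - (D_Y z)(\n f, X).
$$

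Next I would compute the right-hand side $d i_{\n f} z$ using the formula $d\alpha(X, Y) = (D_X \alpha)(Y) - (D_Y \alpha)(X)$ for the $1$-form $\alpha = i_{\n f} z$. Differentiating $\alpha(Y) = z(\n f, Y)$ and using metric compatibility yields $(D_X \alpha)(Y) = (D_X z)(\n f, Y) + z(D_X \n f, Y)$, so that
$$
d i_{\n f} z(X, Y) = (D_X z)(\n f, Y) - (D_Y z)(\n f, X) + \left[ z(D_X \n f, Y) - z(D_Y \n f, X) \right].
$$
The first two terms already coincide with $\tilde{i}_{\n f} C(X, Y)$, so the entire identity reduces to showing that the bracketed Hessian correction vanishes.

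To handle the correction term I would substitute the vacuum static equation in the form $Ddf = fz - \frac{sf}{n(n-1)} g$, giving $\langle D_X \n f, E_k \rangle = f\, z(X, E_k) - \frac{sf}{n(n-1)} \langle X, E_k \rangle$ in a local orthonormal frame $\{E_k\}$. Expanding $z(D_X \n f, Y)$ then produces exactly
$$
z(D_X \n f, Y) = f\, z^2(X, Y) - \frac{sf}{n(n-1)} z(X, Y), \qquad z^2(X, Y) = \sum_k z(X, E_k) z(E_k, Y).
$$
Since both $z$ and its square $z^2$ are symmetric $2$-tensors, the difference $z(D_X \n f, Y) - z(D_Y \n f, X)$ vanishes identically, and the identity $\tilde{i}_{\n f} C = d i_{\n f} z$ follows.

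I do not expect a genuine obstacle: the statement is an algebraic consequence of the static equation combined with the constancy of $s$, and the only point requiring care is the cancellation of the Hessian terms. The essential observation is that this correction is symmetric in $X$ and $Y$ precisely because powers of the symmetric tensor $z$ stay symmetric, so antisymmetrizing through the exterior derivative annihilates it. This is the structural reason that the Cotton tensor contracted against $\n f$ in its free slot collapses to the exact form $d i_{\n f} z$.
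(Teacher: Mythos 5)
Your proof is correct and is essentially the computation the paper has in mind (the paper omits the proof of this lemma, deferring to \cite{h-y2} for details): the constancy of $s$ reduces $C$ to $d^D z$, and the Hessian correction terms in $d\,i_{\nabla f}z$ cancel upon antisymmetrization because substituting the static equation turns them into the symmetric tensors $z\circ z$ and $z$. No gaps.
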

\begin{proof} 
Choose a local frame $\{E_i\}$ which is normal at a point $p \in M$, 
and let $\{\theta ^i\}$ be its dual coframe so that $d\theta^i\vert_{p}=0$.  
Since $i_{\nabla f}z=\sum_{l,k=1}^n f_l z_{lk}\theta ^k$ with $E_l(f) = f_l$ and
$z(E_l, E_k) = z_{lk}$, by (\ref{eqn2020-1-7-11}), we have
\bea
di_{\n f}z &=& 
\sum_{j,k} \sum_{l} (f_{lj} z_{lk} + f_l z_{lk;j}) \theta^j \wedge \theta^k \\
&=&
\sum_{j<k} \sum_{l} \left\{(f_{lj} z_{lk} -f_{lk} z_{lj}) +  f_l (z_{lk;j} - z_{lj;k})\right\} 
\theta^j \wedge \theta^k \\
&=&
\sum_{j<k} \sum_{l} \left[\left\{\left( f z_{lj} -\frac{sf\,\delta_{lj} }{n(n-1)}\right) z_{lk}
- \left( f z_{lk} -\frac{sf\,\delta_{lk}}{n(n-1)} \right) z_{lj}\right\}  \right] \theta^j \wedge \theta^k \\
& &+ \sum_{j<k} \sum_{l} f_l C_{jkl}  \theta^j \wedge \theta^k\\
&=&
 \sum_{j<k} \sum_{l} f_l C_{jkl}  \theta^j \wedge \theta^k =
 {\tilde i}_{\n f}C.
 \eea
\end{proof}

\begin{lemma}\label{closedform}
$\o$ is a closed $2$-form, i.e.,
$d\o = 0$.
\end{lemma}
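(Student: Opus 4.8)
The plan is to combine the defining formula $\omega = df \wedge i_{\nabla f}z$ with the two identities already established, namely Lemma~\ref{lem191}, which says $d(i_{\nabla f}z) = \tilde{i}_{\nabla f}C$, and Lemma~\ref{lem2018-4-30-20}, which gives $\omega = -f\,\tilde{i}_{\nabla f}C$. Writing $\eta := i_{\nabla f}z$ for the relevant $1$-form and using $d(df)=0$, applying the exterior derivative to the definition yields
\[
d\omega = d(df\wedge\eta) = -\,df\wedge d\eta,
\]
so the whole question reduces to understanding the $3$-form $df\wedge d\eta$.

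First I would feed Lemma~\ref{lem191} into Lemma~\ref{lem2018-4-30-20}: since $d\eta = \tilde{i}_{\nabla f}C$ and $\omega = -f\,\tilde{i}_{\nabla f}C$, I obtain the pointwise relation
\[
df\wedge\eta = \omega = -f\,d\eta .
\]
Wedging this identity on the left by $df$ and using $df\wedge df = 0$ annihilates the left-hand side, leaving
\[
0 = df\wedge(df\wedge\eta) = -\,f\,(df\wedge d\eta),
\]
that is, $f\,(df\wedge d\eta)=0$ on all of $M$. Substituting the displayed expression for $d\omega$ then gives $f\,d\omega = 0$ identically on $M$.

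To finish, I would remove the factor $f$. On the open set $\{f\neq 0\}$ we immediately get $d\omega=0$. The vacuum static equation is known to have no critical points of $f$ on the level set $f^{-1}(0)$ (as recalled in the introduction), so $f^{-1}(0)$ is a regular hypersurface and in particular nowhere dense; hence $\{f\neq 0\}$ is open and dense in $M$. Since $d\omega$ is continuous, its vanishing on a dense set forces $d\omega=0$ everywhere, which is the assertion.

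The algebra is essentially formal once the two input lemmas are in place, so I do not anticipate a genuine obstacle in the computation itself. The one point that requires care is the passage across the zero set of $f$: the natural identity one derives carries an overall factor of $f$, and one must justify cancelling it. This is precisely where the structural fact that $f^{-1}(0)$ contains no critical points (hence is a nowhere-dense regular level set), combined with the continuity of $d\omega$, does the work.
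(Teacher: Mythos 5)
Your argument is correct and follows the route the paper intends: the paper omits the proof (deferring to \cite{h-y2}), but the two ingredients it sets up for exactly this purpose, namely $\omega=-f\,\tilde{i}_{\nabla f}C$ from Lemma~\ref{lem2018-4-30-20} and $d\,i_{\nabla f}z=\tilde{i}_{\nabla f}C$ from Lemma~\ref{lem191}, combine precisely as you do to give $f\,d\omega=0$. Your handling of the factor $f$ via the regularity of $f^{-1}(0)$ and continuity of $d\omega$ is a valid and welcome piece of care that the sources typically gloss over.
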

\begin{proof}
Choose a local frame $\{E_i\}$ with $E_1 = N = {\n f}/{|\n f|}$, and let $\{\theta^i\}$ be its dual coframe. 
Then, by Lemma~\ref{simple} and Lemma~\ref{lem191} 
$$
di_{\n f}z = 
\sum_{j<k} \sum_{l} f_l C_{jkl}  \theta^j \wedge \theta^k 
= \sum_{j<k}  |\n f| C_{jk1}  \theta^j \wedge \theta^k 
=
 |\n f| \sum_{k=2}^n   C_{1k1}  \theta^1 \wedge \theta^k. 
$$
Thus, by taking the exterior derivative of $\omega$ in (\ref{keyeqn}), we have
$$
d\o = - df \wedge d i_{\n f}z = - |\n f| \theta^1 \wedge
\left( |\n f| \sum_{k=2}^n   C_{1k1}  \theta^1 \wedge \theta^k\right) =0.
$$
\end{proof}

\section{static vacuum spaces with $\o =0$}

In this section, we give some geometric structural properties for static vacuum spaces such that the 2-form  $\o = df \wedge i_{\n f} z$
is vanishing. If $\o =0$, then one can see that
\bea
z(\nabla f, X)=0 %\label{eqn2019-5-27-2}
\eea
for any vector field $X$ orthogonal to $\n f$, and we may write 
$$
i_{\nabla f}z =\a df,\quad \mbox{where}  \,\,  \a = z(N, N) \,\,\mbox{with}\,\, N = \frac{\n f}{|\n f|}
$$
as a $1$-form. Note that the function $\a$ is well-defined only on the set $M \setminus {\rm Crit}(f)$, where ${\rm Crit}(f)$ is the set of
all critical points of $f$. However since $|\a| \le |z|$, $\a$ can be extended to a $C^0$ function on the whole $M$. 
See \cite{ych} for more details.

\begin{lemma} \label{Cotz}
Let $(g, f)$ be a non-trivial solution of the static vacuum equation satisfying  $\o=0$.  Then we have 
$$
D_NN = 0, \quad \tilde{i}_{\nabla f}C=0 \quad \mbox{and}\quad \tilde{i}_{\nabla f}T=0.$$
In particular,  we have
\be 
i_{\nabla f}z =\a \, df\label{eqn1}
\ee
as a $1$-form, and so
\be
f|z|^2= \nabla f(\a)  -\frac {sf}{n-1}\a. \label{eqn2}
\ee
\end{lemma}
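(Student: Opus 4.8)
The plan is to establish the five conclusions in the order listed, using only the hypothesis $\omega = 0$, the identities collected in Section~2, and the vacuum static equation (\ref{eqn2020-1-7-11}). First I would read off the radial flatness $z(\n f, X) = 0$ for $X \perp \n f$ directly from $\omega = 0$: since $\omega = df \wedge i_{\n f}z$, evaluating on the pair $(\n f, X)$ gives $\omega(\n f, X) = |\n f|^2 z(\n f, X) - df(X)\, z(\n f, \n f) = |\n f|^2 z(\n f, X)$, because $df(X) = 0$. Hence $z(\n f, X) = 0$ on $M \setminus {\rm Crit}(f)$, which is exactly the statement that $i_{\n f}z$ annihilates every $X \perp \n f$ while taking the value $z(\n f, N) = |\n f|\, \a$ on $N$; that is, $i_{\n f}z = \a\, df$ with $\a = z(N,N)$, which is (\ref{eqn1}). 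The two tensorial identities then come for free from Lemma~\ref{lem2018-4-30-20}, which reads $\omega = (n-1)\tilde{i}_{\n f}T = -f\,\tilde{i}_{\n f}C$: dividing by $n-1$ gives $\tilde{i}_{\n f}T = 0$, while $f\,\tilde{i}_{\n f}C = 0$ gives $\tilde{i}_{\n f}C = 0$ on $\{f \neq 0\}$ and hence everywhere by continuity, since $\n f \neq 0$ along $f^{-1}(0)$.

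For $D_N N = 0$ I would substitute the radial flatness back into the static equation written as $Ddf = fz - \frac{sf}{n(n-1)}g$. Using (\ref{eqn1}) one finds $Ddf(\n f, Y) = \left(f\a - \frac{sf}{n(n-1)}\right)\langle \n f, Y\rangle$ for every $Y$, so $D_{\n f}\n f$ is a multiple of $\n f$, and therefore $D_N\n f = \tfrac{1}{|\n f|}D_{\n f}\n f$ is a multiple of $N$. Consequently $D_N N$ is parallel to $N$; but $\langle D_N N, N\rangle = \tfrac12 N|N|^2 = 0$, so $D_N N$ is at once parallel and orthogonal to $N$, forcing $D_N N = 0$.

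Finally, (\ref{eqn2}) would be obtained by computing the divergence of the globally smooth vector field $V := z(\n f, \cdot)^\sharp$ in two ways. Because $s$ is constant, the contracted second Bianchi identity makes $z$ divergence-free, so the cross term drops and ${\rm div}\, V = \langle z, Ddf\rangle$; and since ${\rm tr}\, z = 0$, the static equation gives $\langle z, Ddf\rangle = f|z|^2$. On the other hand $V = \a\, \n f$ by (\ref{eqn1}), so ${\rm div}\, V = \n f(\a) + \a\, {\rm tr}(Ddf) = \n f(\a) - \frac{sf}{n-1}\,\a$, where I used $\tr f = {\rm tr}(Ddf) = -\frac{sf}{n-1}$ from (\ref{eqn2018-9-5-1}). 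Equating the two computations of ${\rm div}\, V$ yields (\ref{eqn2}).

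I do not expect a genuine obstacle in any single step: each reduces either to a pointwise evaluation of $\omega$, a substitution of the static equation, or an integration-by-parts identity. The only points requiring care are the bookkeeping near ${\rm Crit}(f)$ and $f^{-1}(0)$ (where $\a$ is merely continuous, so the tensor identities are extended by continuity and (\ref{eqn2}) is asserted only off ${\rm Crit}(f)$), and, in the last step, keeping straight that the vanishing of the cross term rests on $z$ being divergence-free while the trace term uses ${\rm tr}\, z = 0$. The divergence identity (\ref{eqn2}) is thus the most computational step, but it becomes routine once these two facts are in hand.
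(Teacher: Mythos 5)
Your proposal is correct and follows essentially the same route as the paper: radial flatness by evaluating $\omega$ on $(\n f, X)$, the identities $\tilde{i}_{\n f}T=0$ and $\tilde{i}_{\n f}C=0$ from Lemma~\ref{lem2018-4-30-20}, $D_NN=0$ from the static equation restricted to the radial direction, and (\ref{eqn2}) by taking the divergence of $i_{\n f}z=\a\,df$ (your two-sided computation of $\operatorname{div}(z(\n f,\cdot)^\sharp)$ is exactly the paper's $\d(i_{\n f}z)=-f|z|^2$ step in the opposite sign convention). The only difference is that you spell out details the paper leaves implicit, including the continuity extension across $f^{-1}(0)$.
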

\begin{proof}
Since $N(|\n f|) = f\a - \frac{sf}{n(n-1)}$, it is easy to see that $D_NN=0$. The others  follow directly from Lemma~\ref{lem2018-4-30-20}. For (\ref{eqn2}), 
we note from the static vacuum equation that
$$
\d (i_{\n f}z) = -f|z|^2.
$$
So, we can obtain (\ref{eqn2}) by taking the divergence of (\ref{eqn1}).
\end{proof}

One of basic observations for static vacuum spaces $(M^n, g, f)$ satisfying $\o = df \wedge i_{\n f}z = 0$ is the following. 

\begin{lemma}\label{lem2020-8-23-2}
The functions $|\n f|, \a$ and $|z|^2$ are constants on each (connected component of) 
the level hypersurfac $f^{-1}(c)$ of $f$.
\end{lemma}
\begin{proof}
Let $X$ be a tangent vector on $f^{-1}(c)$ so that $z(X, \n f ) = 0$ by our assumption.
 Therefore, from (\ref{eqn2020-1-7-11}), we 
have,
$$
\frac{1}{2} X(|\n f|^2) = Ddf(\n f, X) = z(\n f, X) - \frac{sf}{n(n-1)} g(\n f, X) = 0.
$$ 
Since $D_NN=0$ by Lemma~\ref{Cotz}, we have
$g(D_NX, N) = - g(X, D_NN) = 0$ and $D_N\n f = - |\n f|N\left(\frac{1}{|\n f|}\right) \n f$. So,
\be
D_N z(X, \n f) = - z(D_N X, \n f)-z(X, D_N \n f) = 0.\label{eqn2020-8-26-1}
\ee
Since $\tilde{i}_{\nabla f}C=0$ by Lemma~\ref{Cotz} again, we have,
$$
0=C(X, N, \nabla f)=D_X z(N, \nabla f)-D_{N}z(X, \nabla f) = D_X z(N, \nabla f)=|\nabla f|X(\a),
$$
implying that $\a $ is a constant on $f^{-1}(c)$.

The property $D_NN=0$  also implies $ [X, N]$ is orthogonal to $\n f$ and, therefore,
$$
X(N(\alpha)) = N(X(\alpha)) - [X, N](\alpha) = 0.
 $$
Since $\n \alpha = N(\alpha)N$, it shows $Dd\a(X, \n f) = 0$, and from (\ref{eqn2}), we have $X(|z|^2) = 0$.
Consequently, we conclude that $|z|^2$ is a constant along each level set $f^{-1}(c)$ of $f$.
\end{proof}

\begin{lemma}\label{cor191-1}
Let $(g,f)$ be a non-trivial solution of the static vacuum equation  on an $n$-dimensional compact manifold $M$ satisfying $\o=0$. Then 
there are no critical points of $f$ other than its maximum and minimum points of $f$.
\end{lemma}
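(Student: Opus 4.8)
The plan is to show that $|\nabla f|^2$ depends only on the value of $f$, and that, viewed as a function of that value, it is strictly monotone on each side of $0$; this will force $|\nabla f|^2$ to be strictly positive at every non-extremal level, so that $\nabla f$ can vanish only where $f$ attains its maximum or minimum.

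First I would exploit the eigenvector property of $\nabla f$. By Lemma~\ref{Cotz} we have $i_{\nabla f}z=\alpha\,df$, i.e. $z(\nabla f,Y)=\alpha\,\langle\nabla f,Y\rangle$ for every $Y$. Feeding this into the vacuum static equation written as $Ddf=fz-\frac{sf}{n(n-1)}g$ gives $Ddf(\nabla f,Y)=\bigl(f\alpha-\frac{sf}{n(n-1)}\bigr)\langle\nabla f,Y\rangle$, and hence
\[
\nabla|\nabla f|^2=2f\Bigl(\alpha-\tfrac{s}{n(n-1)}\Bigr)\nabla f .
\]
In particular $\nabla|\nabla f|^2$ is everywhere proportional to $\nabla f$, so $|\nabla f|^2$ is constant along connected components of the level sets of $f$ (and its tangential derivative vanishes).

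Next I would upgrade this to a genuine single-valued profile. Since $\alpha$ extends continuously to $M$ and is constant on each level set (Lemma~\ref{lem2020-1-7-10}), one may write $\alpha=\bar\alpha(f)$ for a continuous $\bar\alpha$ on the range $[m,M]=f(M)$, where $m=\min f<0<M=\max f$ (the sign change follows from $\int_M f=0$, as $f$ is a nonconstant eigenfunction of $\Delta$ with $\Delta f=-\frac{s}{n-1}f$). Putting $h(t)=2t\bigl(\bar\alpha(t)-\frac{s}{n(n-1)}\bigr)$ and letting $H$ be a $C^1$ antiderivative of $h$, the identity above reads $\nabla\bigl(|\nabla f|^2-H\circ f\bigr)=0$; since $M$ is connected this yields $|\nabla f|^2=U(f)$ for a single $C^1$ function $U$ on $[m,M]$ with $U'(c)=2c\bigl(\bar\alpha(c)-\frac{s}{n(n-1)}\bigr)$. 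I expect this globalization — passing from fiberwise constancy on (possibly disconnected) level sets to one single-valued $C^1$ profile — to be the main point requiring care.

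Finally I would read off the conclusion from the sign of $U'$. Because $\alpha\le 0$ on $M$ (\cite{ych}) and $s>0$ (a nonconstant solution on a compact manifold forces positive scalar curvature), the factor $\bar\alpha-\frac{s}{n(n-1)}$ is strictly negative, so $U'(c)$ has the sign of $-c$; thus $U$ is strictly increasing on $[m,0]$ and strictly decreasing on $[0,M]$. Combined with $U(m)=U(M)=0$ (the gradient vanishes at the global extrema), this forces $U(c)>0$ for every $c\in(m,M)$, i.e. $|\nabla f|^2>0$ away from the levels $m$ and $M$. Hence $f$ has no critical points except at its maximum and minimum, as claimed. Note that the borderline level $c=0$ needs no separate treatment, since $0$ is interior and $U(0)>0$, consistent with the known fact that $f^{-1}(0)$ contains no critical points.
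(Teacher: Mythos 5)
Your proof is correct, but it takes a genuinely different route from the paper's. The paper argues via a second-order identity: it combines the Bochner--Weitzenb\"ock formula with the static equation to get $\tfrac12\Delta|\nabla f|^2-\tfrac12\,\nabla f(|\nabla f|^2)/f=|Ddf|^2$, applies the maximum principle separately on $\{f<0\}$ and $\{f>0\}$ to conclude that $|\nabla f|^2$ can have an interior local maximum only on $f^{-1}(0)$, and then derives a contradiction from the fact that an intermediate critical level would force such a maximum strictly inside one of the two regions. You instead work with the first-order identity $\nabla|\nabla f|^2=2f\bigl(\alpha-\tfrac{s}{n(n-1)}\bigr)\nabla f$ coming from $i_{\nabla f}z=\alpha\,df$ (Lemma~\ref{Cotz}), integrate it to a single $C^1$ profile $|\nabla f|^2=U(f)$, and read off $U>0$ on the open range of $f$ from the sign of $U'$. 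The trade-off is that you need two inputs the paper's proof does not use here --- the fiberwise constancy of $\alpha$ (Lemma~\ref{lem2020-1-7-10}) and the sign facts $\alpha\le 0$, $s>0$ --- but all of these are established before this lemma, so there is no circularity; in exchange you obtain strictly more, namely an explicit monotone profile for $|\nabla f|^2$ (increasing where $f<0$, decreasing where $f>0$), and you avoid the paper's somewhat informal step of locating a local maximum ``between'' two level sets. The globalization step you flag does go through as described, using continuity of the extended $\alpha$ and connectedness of $M$. Two cosmetic caveats only: reusing $M$ for $\max f$ clashes with the name of the manifold, and your sign-change argument via $\int_M f=0$ presumes $f$ non-constant, which is the only case of interest (for constant $f$ the statement is vacuous).
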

\begin{proof}
Note that $|\nabla f|^2$ is constant on each level sets of $f$.  In fact, let $X$ be any vector field orthogonal to $\n f$. It follows from the static equation (\ref{eqn2020-1-7-11}) together with our assumption that
$$
\frac12 X(|\n f|^2) = Ddf(\n f, X) = fz(\n f, X) - \frac{sf}{n(n-1)}g(\n f, X) = 0.
$$
From the Bochner-Weitzenb\"ock formula (cf. \cite{pl1}, \cite{wu})  together with (\ref{eqn2020-1-7-11}),   
we have
\be
\frac 12 \tr |\nabla f|^2-\frac 12 \frac {\nabla f(|\nabla f|^2)}{f}  =|Ddf|^2 \ge 0. \label{eqn2020-1-14}
\ee
By the maximum principle, the function $|\nabla f|^2$ cannot have its 
local maximum in  $M_0:=\{x\in M \, |\, f(x) < 0\}$ nor in
 $M^0:=\{x\in M \, |\, f(x) > 0\}$. In other words, $|\n f|^2$ may attain its local
 maximum only on the set $f^{-1}(0)$.
 
Let $p \in M$ be a  critical point of $f$  with $f(p) = c$
other than  minimum or maximum points 
of $f$ so that $\n f(p) = 0$. Then the function $|\n f|^2$ must have a local maximum between $f^{-1}({\rm min})$ and $f^{-1}(c)$, or
between $f^{-1}(c)$ and $f^{-1}({\rm max})$.
This contradicts the argument mentioned above.
\end{proof}
 
Note that   if minimum set or  maximum set of the potential function $f$ contains more than two points, or is a submanifold of codimension $>1$, the function $f$ must contains its critical points in $M$ other than  minimum and maximum sets of $f$.
Thus, Lemma~\ref{cor191-1} shows that minimum set of $f$ consists of only a single point or a hypersurface in $M$, and the same property holds for maximum set of $f$.
The following result shows that the structure of minimum set or maximum set of the potential function  is either a single point or a totally geodesic stable minimal hypersurface. Since  proof for this is long, but routine and not hard, we will give a proof  in Appendix. 
%In \cite{h-y2}, we show that,  in case of CPE with positive isotropic curvature, both minimum and maximum sets are single points.
  
\begin{theorem}\label{thm2019-12-23-1}
Let $(g,f)$ be a non-trivial solution of the static vacuum equation on an $n$-dimensional compact manifold $M$ satisfying  $\o=0$.  Let ${\min_M f = a}$.
Then either $f^{-1}(a)$  contains only a single point, or $f^{-1}(a)$ is a totally geodesic stable minimal hypersurface of $M$.
The same property also holds for  $f^{-1}(b)$ with ${\max_M f = b}$.
Furthermore, if $f^{-1}(a)$ is a single point, then so is $f^{-1}(b)$ and vice versa.
In case that $f^{-1}(a)$ is a single point, every level set $f^{-1}(t)$ except $f^{-1}(a)$ and $f^{-1}(b)$ is
a hypersurface and is homotopically a sphere ${\Bbb S}^{n-1}$.
\end{theorem}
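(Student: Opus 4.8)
The plan is to analyze the structure of the minimum and maximum sets of $f$ using the information already gathered, principally the facts that $|\n f|^2$ is constant on each level set (Lemma~\ref{thm3} together with the proof of Lemma~\ref{cor191-1}), that $z(\n f, X) = 0$ for $X \perp \n f$ (Lemma~\ref{Cotz}), that $D_NN = 0$, and the vacuum static equation in the form (\ref{eqn2020-1-7-11}). First I would observe that the minimum set $f^{-1}(a)$ is a smooth submanifold: since $a = \min_M f$, at any point $p \in f^{-1}(a)$ we have $\n f(p) = 0$, so by Lemma~\ref{cor191-1} the only critical points of $f$ are exactly the maximum and minimum sets, and the level set near a regular value is a hypersurface. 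At a minimum point the Hessian $Ddf$ is positive semidefinite; evaluating (\ref{eqn2020-1-7-11}) at $p$ where $f(p) = a < 0$ (note $a < 0$ since $\int_M f$ relates to the constant $s$ and $f$ is $L^2$-orthogonal to constants, forcing $f$ to change sign) gives $Ddf = a\,z - \frac{sa}{n(n-1)}g$ at $p$.

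Next I would determine the dimension of $f^{-1}(a)$ from the rank of $Ddf$ at its points. The key dichotomy is whether $Ddf(p)$ is nondegenerate (forcing $p$ to be an isolated critical point, hence $f^{-1}(a)$ a single point) or degenerate. Using $\a = z(N,N)$ and the fact that $z$ restricted to the orthogonal complement of $\n f$ is controlled, I would compute the eigenvalues of $Ddf$ at $p$ and show that either they are all nonzero (isolated point case) or the kernel has dimension $n-1$, which would make $f^{-1}(a)$ an $(n-1)$-dimensional submanifold, i.e.\ a hypersurface. To see it is \emph{totally geodesic}, I would use that $f^{-1}(a)$ is a level set on which $N = \n f / |\n f|$ extends as the unit normal off the set; the second fundamental form is governed by $D_N N = 0$ (Lemma~\ref{Cotz}) and the fiberwise constancy of $|\n f|^2$, which forces the normal derivative structure to vanish along $f^{-1}(a)$. \emph{Stability and minimality} of the hypersurface should follow from the Bochner-type identity (\ref{eqn2020-1-14}): since $|\n f|^2$ attains a one-sided extremum approaching $f^{-1}(a)$ and the Hessian term controls the relevant Jacobi-type operator, the standard stability inequality is satisfied.

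For the statement that $f^{-1}(a)$ is a single point if and only if $f^{-1}(b)$ is, I would argue by the symmetry of the two extremal sets under the structure of the vacuum static equation: if one of them is a hypersurface, the warped-product structure near that hypersurface (obtained since the metric splits as $dt^2 + (\text{fiber metric})$ along the flow of $N$, using $D_NN = 0$ and fiberwise constancy of $|\n f|^2$ and $|z|^2$) propagates across $M$. A product/warped structure with a hypersurface at one end is incompatible with a single point at the other; the topology forced by a single critical point at the minimum (a Morse-type handle of maximal index) must be mirrored at the maximum. Concretely, if $f^{-1}(a)$ is a point, then gradient flow of $f$ from the minimum sweeps out $M \setminus \{f^{-1}(a), f^{-1}(b)\}$ as a product $(a,b) \times f^{-1}(t)$, and the single regular fiber type forces $f^{-1}(b)$ to also be a point by the same Morse-index argument at the top.

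Finally, for the claim that when $f^{-1}(a)$ is a single point every intermediate level set $f^{-1}(t)$ is a hypersurface homotopy equivalent to $\mathbb{S}^{n-1}$, I would invoke Morse theory: with exactly two critical points (one minimum, one maximum, both nondegenerate as isolated points by the dichotomy above) the Reeb sphere theorem applies, so $M$ is homeomorphic to $\mathbb{S}^n$ and every regular level set, being the boundary of a disk neighborhood of the minimum, is homotopy equivalent to $\mathbb{S}^{n-1}$. The main obstacle I expect is rigorously establishing the dichotomy on the rank of $Ddf$ at extremal points and then upgrading the degenerate case to a genuinely \emph{totally geodesic stable minimal} hypersurface: the minimality and stability require carefully combining (\ref{eqn2020-1-14}) with the radial flatness (\ref{eqn2019-5-27-2}) and $D_NN=0$, and one must verify that the semidefinite Hessian at $f^{-1}(a)$ does not admit intermediate rank (which would contradict the submanifold structure forced by $f$ being a genuine extremum with constant $|\n f|^2$ on fibers). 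This delicate rank analysis, rather than the topological conclusions, is where the real work lies.
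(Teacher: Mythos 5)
Your outline correctly identifies the dichotomy and the desired conclusions, but the two places where you yourself flag "the real work" are precisely where the proposal does not close. First, the total geodesy of $\Sigma = f^{-1}(a)$ does not follow from $D_NN=0$ and fiberwise constancy of $|\n f|^2$: the second fundamental form of a regular level set is $\frac{1}{|\n f|}\left(fz-\frac{sf}{n(n-1)}g\right)$ restricted to the tangent directions, and as one approaches $\Sigma$ this is a $0/0$ limit, since the numerator vanishes on $\Sigma$ by the minimality of $f$ there (equation (\ref{eqn2020-6-5-1}) in the paper). The paper resolves this by differentiating $|\n f|\,D_{\gamma'}N$ once more in the normal direction along a geodesic $\gamma$ tangent to $\Sigma$, and the limit then requires knowing that $\nu\bigl(z(\gamma',\gamma')\bigr)=0$ at $p$ --- the paper's Assertion, proved by a second-derivative sign analysis of $\vp(t)=f\circ\gamma(t)$ using that $a$ is the minimum. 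Nothing in your proposal supplies this normal derivative of $z$, and without it the second fundamental form does not cancel. Likewise your rank analysis of $Ddf$ at a critical point cannot get off the ground as stated: at $p\in f^{-1}(a)$ we have $\n f(p)=0$, so the hypothesis $\o=0$ (i.e.\ $i_{\n f}z=\a\,df$) gives no constraint on $z_p$, and the eigenvalue structure $z=\mathrm{diag}(\a,-\tfrac{\a}{n-1},\dots)$ that would exclude intermediate ranks is only established later (after $T=0$ is proved in Section 5). The paper instead gets the point-or-hypersurface dichotomy from Lemma~\ref{cor191-1} plus Morse theory, and gets stability not from (\ref{eqn2020-1-14}) but from the explicit computation $z(\nu,\nu)=-\frac{s}{n}$, hence $\mathrm{Ric}(\nu,\nu)=0$, which trivializes the stability operator for a totally geodesic hypersurface.

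Second, your argument that a point minimum forces a point maximum is purely topological and is false at that level of generality: a function on $\mathbb{RP}^n$ can have a single nondegenerate minimum and an $\mathbb{RP}^{n-1}$ as its maximum set, with all intermediate regular fibers connected spheres, so "the single regular fiber type forces $f^{-1}(b)$ to also be a point" does not hold. The paper excludes the mixed case geometrically, in case (ii) of Theorem~\ref{lem2020-1-7-16}: if one end is a hypersurface then $\a=-\frac{s}{n}\neq 0$ there, while the classification obtained from Bach-flatness in the point-minimum case forces $\a\equiv 0$, a contradiction. You would need some such geometric input (or the Qing--Yuan classification) to finish this step; topology alone does not.
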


From Lemma~\ref{cor191-1} and Theorem~\ref{thm2019-12-23-1}, we have the following.

\begin{theorem}\label{thm2020-6-18}
Let $(g,f)$ be a non-trivial solution of the static vacuum equation on an $n$-dimensional compact manifold $M$ positive scalar curvature
satisfying  $\o=0$. Let  ${\min_M f = a}$. Suppose that $f^{-1}(a)$ is not a single point. Then  $f^{-1}(a)$ is a compact Einstein manifold of positive Ricci curvature.
\end{theorem}
\begin{proof}
Let $\Sigma = f^{-1}(a)$ and we may assume that $a<0$ from the maximum principle together with (\ref{eqn2020-1-14}).
It follows  from Lemma~\ref{cor191-1} and Theorem~\ref{thm2019-12-23-1}
 that $\Sigma$ is a connected, totally geodesic stable minimal hypersurface of $M$. In particular, from (\ref{static1}) together with $a<0$, $\Sigma$
 satisfies ${\rm Ric}_\Sigma = \frac{s}{n-1}g_\Sigma$, which shows $(\Sigma, g_\Sigma)$ is an Einstein manifold of positive Ricci curvature.
 %In particular, by Myers theorem, the fundamental group $\pi_1(\Sigma)$ of $\Sigma$ must be finite.
 \end{proof}

The following property is well-known.

\begin{lemma}\label{thm2020-1-11-1}
Let $(g,f)$ be a non-trivial solution of the static vacuum equation on an $n$-dimensional compact manifold $M$. Then any connected component of $f^{-1}(0)$ is a totally geodesic hypersurface of $M$.
\end{lemma}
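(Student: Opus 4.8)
The plan is to show that any connected component $\Sigma_0$ of the zero set $f^{-1}(0)$ is a smooth hypersurface on which $f$ vanishes but $\nabla f$ does not, and then to verify that its second fundamental form vanishes identically. The first point is the standard observation for vacuum static spaces: at a point $p$ where $f(p)=0$, if also $\nabla f(p)=0$, then the vacuum static equation $Ddf = \left(r - \frac{s}{n-1}g\right)f$ forces $Ddf(p)=0$ as well, so $p$ is a degenerate critical point; a uniqueness/unique-continuation argument (or the structure result that $0$ is a regular value, already invoked in the literature cited as \cite{Bour}) shows $f$ cannot vanish to second order on a component of its zero set unless it vanishes identically. Hence $0$ is a regular value, and each component $\Sigma_0 = f^{-1}(0)\cap(\text{component})$ is an embedded hypersurface with unit normal $\nu = N = \frac{\nabla f}{|\nabla f|}$ well-defined along $\Sigma_0$.

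The heart of the argument is computing the second fundamental form $A(X,Y) = \langle D_X N, Y\rangle$ for $X, Y$ tangent to $\Sigma_0$. On $f^{-1}(0)$ the right-hand side of the vacuum static equation (\ref{static1}) vanishes because it carries the factor $f$, so $Ddf = 0$ along $\Sigma_0$. I would then exploit the relation between the Hessian of $f$ and the gradient of $|\nabla f|$: differentiating $|\nabla f|^2$ and using $Ddf(X,Y) = |\nabla f|\,A(X,Y)$ for tangential $X,Y$ (which comes from $D_X\nabla f = |\nabla f|D_X N$ plus a normal term that dies against tangential $Y$), one sees that $Ddf|_{\Sigma_0}=0$ together with $|\nabla f|\neq 0$ on $\Sigma_0$ gives $A \equiv 0$. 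In other words, the tangential-tangential block of the Hessian is precisely $|\nabla f|$ times the second fundamental form, and since that Hessian block is zero along the level set where $f=0$, the hypersurface is totally geodesic.

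The one step that needs care, and which I expect to be the main obstacle, is justifying that $\nabla f \neq 0$ on all of $f^{-1}(0)$ rather than merely at a generic point. This is exactly the regular-value claim: one must rule out that a critical point of $f$ sits inside the zero set. The cleanest route is the classical fact (attributed in the text to \cite{Bour}, and underlying the constancy of $s_g$ via $\frac12 f\,ds_g = 0$) that $f$ has no critical points in $f^{-1}(0)$; alternatively one can argue directly that if $\nabla f(p)=0$ with $f(p)=0$, then along any geodesic emanating from $p$ the function $\varphi(t) = f(\gamma(t))$ satisfies the linear second-order ODE $\varphi'' = \left(r-\frac{s}{n-1}g\right)(\gamma',\gamma')\varphi$ with $\varphi(0)=\varphi'(0)=0$, whence $\varphi\equiv 0$ and $f$ would vanish on a neighborhood, contradicting non-triviality. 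Once $\nabla f\neq0$ is secured, the totally-geodesic conclusion is immediate from $Ddf|_{\Sigma_0}=0$, and the proof closes.
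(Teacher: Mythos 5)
Your proof is correct, and there is nothing in the paper to compare it against: the lemma is stated without proof, the text merely asserting beforehand that ``it is well-known that the zero set of the potential function $f$ is a totally geodesic hypersurface.'' Your argument is the standard one behind that folklore: along a unit-speed geodesic $\gamma$ the function $\varphi=f\circ\gamma$ satisfies the linear ODE $\varphi''=h\varphi$ with $h=(r-\tfrac{s}{n-1}g)(\gamma',\gamma')$, so $f$ and $df$ cannot vanish simultaneously unless $f$ vanishes identically; hence $0$ is a regular value, and since $Ddf=f\,(r-\tfrac{s}{n-1}g)$ vanishes on $f^{-1}(0)$ while the tangential block of $Ddf$ equals $|\nabla f|$ times the second fundamental form, the level set is totally geodesic. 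The only point to tighten is the very last step of the regularity claim: $f$ vanishing on a neighborhood of $p$ does not by itself contradict non-triviality of $f$ on $M$; you should add that the ODE argument shows the set $\{f=0,\ df=0\}$ is open as well as closed, hence equal to all of $M$ by connectedness if it is nonempty, which forces $f\equiv 0$ and gives the actual contradiction.
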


In case that $\o$ is vanishing for a static vacuum space $(M^n, g, f)$, the tensor $T$ has the following proeprties.

 \begin{lemma}\label{lem7}
Suppose $\o=0$. Then 
\begin{itemize}
\item[(1)] for vectors $X, Y$ orthogonal to $\n f$,
$$
i_{\n f}T(X, Y) = \frac{|\n f|^2}{n-2}\left(z+\frac{\a}{n-1}g\right)(X, Y)
$$
\item[(2)] $i_{\n f}T(\n f, X) = i_{\n f}T(X, \n f) = 0$ for any vector $X$.
\end{itemize}
\end{lemma}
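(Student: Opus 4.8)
The plan is to expand the definition (\ref{defnt}) of $T$ directly and feed in the single structural consequence of $\omega=0$, namely the relation (\ref{eqn1}) from Lemma~\ref{Cotz} that $i_{\n f}z=\a\,df$ as a $1$-form. Adopting for the wedge of a $1$-form with a symmetric $2$-tensor the convention
$$(\theta\wedge b)(U,V,W)=\theta(U)b(V,W)-\theta(V)b(U,W),$$
which is the one implicit in (\ref{eqn2017-4-1-1}), and recalling that $i_{\n f}$ denotes interior product in the first slot, so that $i_{\n f}T(X,Y)=T(\n f,X,Y)$, the whole statement reduces to a short bookkeeping computation. First I would record the two scalar identities produced by (\ref{eqn1}): evaluating $i_{\n f}z=\a\,df$ on $\n f$ gives $z(\n f,\n f)=\a|\n f|^2$, while evaluating on any $X$ orthogonal to $\n f$ gives $z(\n f,X)=0$. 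These, together with $df(\n f)=|\n f|^2$ and $df(X)=\langle\n f,X\rangle$, are the only inputs needed.

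For part (1), I would set $U=\n f$, $V=X$, $W=Y$ with $X,Y\perp\n f$ in the expansion of $T(\n f,X,Y)$. In the $\frac{1}{n-2}df\wedge z$ term the piece $df(X)z(\n f,Y)$ drops because $df(X)=0$, leaving $\frac{1}{n-2}|\n f|^2 z(X,Y)$. In the $\frac{1}{(n-1)(n-2)}i_{\n f}z\wedge g$ term, $z(\n f,X)g(\n f,Y)$ drops because both factors vanish, while $z(\n f,\n f)g(X,Y)=\a|\n f|^2 g(X,Y)$ survives; collecting the two surviving pieces yields exactly $\frac{|\n f|^2}{n-2}\left(z+\frac{\a}{n-1}g\right)(X,Y)$.

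For part (2), the identity $i_{\n f}T(\n f,X)=T(\n f,\n f,X)=0$ is immediate from the antisymmetry of $T$ in its first two arguments, since $T\in\Gamma(\Lambda^2M\otimes T^*M)$. For $i_{\n f}T(X,\n f)=T(\n f,X,\n f)$ I would either substitute $z(\n f,X)=\a\langle\n f,X\rangle$ and $z(\n f,\n f)=\a|\n f|^2$ and watch the two brackets cancel term by term, or more cleanly invoke antisymmetry to write $T(\n f,X,\n f)=-T(X,\n f,\n f)=-\tilde{i}_{\n f}T(X,\n f)$ and then appeal to $\tilde{i}_{\n f}T=0$, already established in Lemma~\ref{Cotz}.

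There is no genuine obstacle here: the computation is routine and self-contained once (\ref{eqn1}) is available. The only point requiring care is the consistency of the wedge-product sign convention and the distinction between the interior product $i_{\n f}$ in the first slot and $\tilde{i}_{\n f}$ in the last slot, since a sign slip there would corrupt the cancellations in part (2).
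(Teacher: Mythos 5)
Your proof is correct and follows essentially the same route as the paper: the paper's entire proof consists of substituting $i_{\n f}z=\a\,df$ into the definition of $T$ to get $T=\frac{1}{n-2}\,df\wedge\bigl(z+\frac{\a}{n-1}g\bigr)$, from which both parts follow by the same wedge-product bookkeeping you carry out explicitly.
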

\begin{proof}
If $\o= 0$, then $i_{\n f}z = \a df$ and so
$$
T = \frac{1}{n-2} df\wedge \left(z+\frac{\a}{n-1}g\right).
$$
\end{proof}

For the curvature tensor $R$ with $N = \n f /|\n f|$, $R_N$ is defined  as follows 
$$
R_N(X, Y) = R(X, N, Y, N)
$$
for any vector fields $X$ and $Y$. For the Weyl curvature tensor $\mathcal W$, 
$\mathcal W_N$ is
similarly defined.  It follows from (\ref{eqn2017-6-12-10-1}) that
 \be
 -|\n f|^2 \mathcal W_N = f i_{\n f}C + (n-1)i_{\n f}T.\label{eqn2019-5-28-1}
 \ee

\begin{lemma} \label{lem2018-8-15-4}
Let $(g, f)$ be a non-trivial solution of the static vacuum equation satisfying  $\o=0$.  Then
$$
\frac{s}{n(n-1)} g  = R_N +z +\frac{f}{|\n f|^2}i_{\n f}C+ \left(\frac{s}{n(n-1)} -\a \right) \frac{df}{|df|}\otimes \frac{df}{|df|}.
$$
\end{lemma}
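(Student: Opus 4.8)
The plan is to read $R_N$ off from the Weyl decomposition of the full curvature tensor and then eliminate the Weyl part by means of the identity (\ref{eqn2019-5-28-1}) and Lemma~\ref{lem7}, both of which are at our disposal precisely because $\o = 0$. First I would record the standard decomposition of $R$ into its Weyl part $\mathcal W$ and the Kulkarni--Nomizu term built from the Schouten tensor $P = \frac{1}{n-2}\left(r - \frac{s}{2(n-1)}g\right) = \frac{1}{n-2}z + \frac{s}{2n(n-1)}g$, and evaluate it on $(X, N, Y, N)$, where $N = \n f/|\n f|$. Since $\o = 0$ gives $i_{\n f}z = \a\, df$ by Lemma~\ref{Cotz}, equivalently $i_N z = \a\, N^\flat$ with $N^\flat := df/|df|$ and $z(X, N) = \a\, N^\flat(X)$ for every $X$, the Schouten term contracts to a combination of $z$, $g$, and $N^\flat\otimes N^\flat$, yielding
$$
R_N = \mathcal W_N + \frac{1}{n-2}\, z + \left(\frac{s}{n(n-1)} + \frac{\a}{n-2}\right)g - 2\left(\frac{\a}{n-2} + \frac{s}{2n(n-1)}\right)N^\flat\otimes N^\flat ,
$$
where $\mathcal W_N(X, Y) = \mathcal W(X, N, Y, N)$. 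The only inputs are the algebraic symmetries of $R$ and $\mathcal W$ together with $i_N z = \a\, N^\flat$.

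Next I would substitute (\ref{eqn2019-5-28-1}), which rewrites $\mathcal W_N = -\frac{f}{|\n f|^2}\, i_{\n f}C - \frac{n-1}{|\n f|^2}\, i_{\n f}T$, and then apply Lemma~\ref{lem7} to convert the $i_{\n f}T$ contribution into curvature data. Because Lemma~\ref{lem7} evaluates $i_{\n f}T$ only on vectors orthogonal to $\n f$ and returns $0$ as soon as one slot is $\n f$, I must first decompose each argument into its $N$-part and its tangential part and re-express the restrictions of $z$ and $g$ to the tangential distribution through $i_N z = \a\, N^\flat$ once more. This gives
$$
\frac{n-1}{|\n f|^2}\, i_{\n f}T = \frac{n-1}{n-2}\, z + \frac{\a}{n-2}\, g - \frac{\a n}{n-2}\, N^\flat\otimes N^\flat .
$$

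Finally I would insert these two expressions into the formula for $R_N$ and collect the coefficients of $z$, $g$, and $N^\flat\otimes N^\flat$ one by one: the $z$-coefficient collapses to $-1$, the $g$-coefficient to $\frac{s}{n(n-1)}$, and the $N^\flat\otimes N^\flat$-coefficient to $\a - \frac{s}{n(n-1)}$. Recalling that $N^\flat\otimes N^\flat = \frac{df}{|df|}\otimes\frac{df}{|df|}$ and rearranging then produces the asserted identity exactly. I expect the main obstacle to be the bookkeeping in this tangential/normal split: three separate $N^\flat\otimes N^\flat$ contributions (from the Schouten contraction, from $i_{\n f}T$, and from the diagonal term of the claim) must cancel to give precisely $\a - \frac{s}{n(n-1)}$, and a stray factor of $n$ or a sign is easiest to mislay here. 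As an independent check that bypasses (\ref{eqn2019-5-28-1}), one may apply the commutation identity $D_X(Ddf)(Y, Z) - D_Y(Ddf)(X, Z) = -R(X, Y, \n f, Z)$ (with the curvature sign normalized so that the round sphere has positive sectional curvature) to the static equation $Ddf = f\bigl(z - \frac{s}{n(n-1)}g\bigr)$, where $s$ is constant so the $g$-part drops out of $d^D$ and $d^D z = C$; then setting $Y = \n f$, dividing by $|\n f|^2$, and using $i_{\n f}z = \a\, df$ together with $C(X, \n f, Z) = -\, i_{\n f}C(X, Z)$ recovers the same identity.
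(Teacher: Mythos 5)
Your proposal is correct and follows essentially the same route as the paper: the same Weyl/Schouten decomposition of $R_N$, eliminated via the identity (\ref{eqn2019-5-28-1}) and Lemma~\ref{lem7}, with all coefficients checking out (the paper merely organizes the bookkeeping differently, first proving the identity on vectors orthogonal to $\n f$ and then extending by the normal decomposition, whereas you carry the $N^\flat\otimes N^\flat$ terms throughout). No gaps.
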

\begin{proof}
 Let
 $$
 \Phi:=  \frac{s}{n(n-1)}g  - z - \frac{f}{|\n f|^2}i_{\n f}C.
 $$
 For vector fields $X, Y$ with $X \perp \n f$ and $ Y \perp \n f$, 
from the curvature decomposition 
$$
R = \frac{s}{2n(n-1)}g\owedge g + \frac{1}{n-2}z\owedge g + \mathcal W
$$
we can obtain
 \bea
 R_N(X, Y) = \frac{s}{n(n-1)}g(X, Y) + \frac{1}{n-2} z(X, Y) + \frac{\a}{n-2} g(X, Y)
 +\mathcal W_N(X, Y).%\label{eqn2018-8-15-5}
 \eea
Since, by Lemma~\ref{lem7} together with (\ref{eqn2019-5-28-1})
 \bea
 \mathcal W_N(X, Y) =
 - \frac{f}{|\n f|^2}i_{\n f}C(X, Y) - \frac{n-1}{n-2}z(X, Y) - \frac{\a}{n-2}g(X, Y),
 \eea
we have
\bea
R_N(X, Y) = \Phi(X, Y). %\label{eqn2019-5-28-2}
\eea
Now, let $X$ and $Y$  are arbitrary tangent vector fields. Then $X$ and $Y$ can be decomposed into
  $$
  X = X_1 + \langle X, N\rangle N,\quad Y = Y_1 +\langle Y, N\rangle N
  $$
  with $\langle X_1, N\rangle =0 =\langle Y_1, N\rangle.$
  Thus,
  \bea
  R_N(X, Y)    &=& 
  R_N(X_1, Y_1)= \Phi (X_1, Y_1)\\
&=&
\Phi(X, Y) - \langle X, N\rangle  \langle Y, N\rangle  \Phi(N, N)\\
 &=&
\Phi(X, Y) +   \left(\a -\frac{s}{n(n-1)} \right)  \frac{df}{|df|}\otimes \frac{df}{|df|}(X, Y).
   \eea   
\end{proof}

\section{ Bach flat static vacuum Spaces}

In this section, we prove that if a compact static vacuum space $(M^n,g, f), n \ge 4$  satisfies $\o = df \wedge i_{\n f}z = 0$, 
then $(M, g)$ is Bach-flat.
To do this,   we introduce a warped product metric involving
$\frac {df}{|\nabla f|}\otimes \frac {df}{|\nabla f|}$ as a fiber metric on each level
set  $f^{-1}(c)$. By investigating properties on this warped product metric,
and deducing its relations  to the given solution metric $g$, we prove the vanishing
of the tensor $T$. 
Consider a warped product metric $\bar{g}$ on $M$ by
\be
\bar{g}= \frac {df}{|\nabla f|}\otimes \frac {df}{|\nabla f|}+|\nabla f|^2 g_{\Sigma},
\label{eqn2019-8-27-1}
\ee
where $g_{\Sigma}$ is the restriction of $g$ to a level set  $\Sigma:= f^{-1}(c)$ for 
a regular value $c$ of $f$. 
Note that, from Theorem~\ref{thm2019-12-23-1}, 
the metric $\bar g$ is smooth on $M$ except only  $f^{-1}(a) \cup f^{-1}(b)$.

The following lemma shows that $\nabla f$ is a (closed) conformal  vector field with respect to the metric $\bar{g}$.

\begin{lemma}\label{lemt1}
Let $(g,f)$ be a non-trivial solution of the static vacuum equation on
 an $n$-dimensional compact manifold $M$ satisfying  $\o = 0$.  Then
$$\frac 12 {\mathcal L}_{\nabla f}\bar{g} =N(|\nabla f|)\bar{g}= \frac 1n 
(\bar{\tr }f)\, \bar{g}.
$$
Here $\mathcal L$ denotes the Lie derivative.
\end{lemma}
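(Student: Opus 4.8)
The plan is to introduce coordinates adapted to the normal flow of the level sets, recognise $\bar g$ as a genuine warped product, and then reduce the statement to a short Hessian computation. Since $\omega=0$, Lemma~\ref{Cotz} gives $D_NN=0$, so the integral curves of $N=\nabla f/|\nabla f|$ are unit-speed geodesics orthogonal to the level sets. Because $|\nabla f|$ is constant on each level set (from $z(\nabla f,X)=0$, since $X(|\nabla f|^2)=2fz(X,\nabla f)-\tfrac{2sf}{n(n-1)}\langle X,\nabla f\rangle=0$ for $X\perp\nabla f$, as in the proof of Lemma~\ref{cor191-1}), the $1$-form $df/|\nabla f|$ is exact: one introduces $t=\int ds/|\nabla f|$, a function of $f$ alone, whose level sets coincide with those of $f$ and for which $\nabla t=N$, $dt=df/|\nabla f|$. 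In these coordinates $g=dt^2+h_t$, while by construction $\bar g=dt^2+\rho(t)^2 g_\Sigma$ is the warped product with fixed fibre $(\Sigma,g_\Sigma)$ and warping function $\rho(t)=|\nabla f|$, a function of $t$ alone; moreover $f=f(t)$ with $f'(t)=df(\partial_t)=|\nabla f|=\rho$.

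The key simplification I would exploit is that the $g$- and $\bar g$-gradients of $f$ coincide: since $f$ depends only on $t$ and both $g$ and $\bar g$ agree in the $\partial_t$-direction (with $\partial_t$ orthogonal to the fibres in each), one has $\nabla f=\bar\nabla f=\rho\,\partial_t$. Consequently
\[
\tfrac12\mathcal{L}_{\nabla f}\bar g=\tfrac12\mathcal{L}_{\bar\nabla f}\bar g=\bar D\bar d f,
\]
the Hessian of $f$ with respect to $\bar g$. This turns the lemma into the assertion that $f$ has $\bar g$-Hessian pointwise proportional to $\bar g$, i.e. that $\nabla f$ is a concircular (conformal gradient) field for $\bar g$.

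It then remains to compute $\bar D\bar d f$ for the warped product $\bar g=dt^2+\rho^2 g_\Sigma$ with $f=f(t)$. Using the warped-product Christoffel symbols $\bar\Gamma^t_{ij}=-\rho\rho'(g_\Sigma)_{ij}$ and $\bar\Gamma^i_{tj}=(\rho'/\rho)\,\delta^i_j$, the remaining ones vanishing, a direct evaluation gives $\bar D\bar d f(\partial_t,\partial_t)=f''=\rho'$, vanishing mixed components, and $\bar D\bar d f_{ij}=-\bar\Gamma^t_{ij}f'=\rho'\rho^2(g_\Sigma)_{ij}=\rho'\,\bar g_{ij}$. Hence $\bar D\bar d f=\rho'\,\bar g=N(|\nabla f|)\,\bar g$, which is the first asserted equality. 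Tracing with respect to $\bar g$ yields $\bar{\tr} f=n\rho'$, so $N(|\nabla f|)=\tfrac1n\bar{\tr} f$ and the second equality follows.

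The only genuinely delicate point is the setup rather than the computation: one must justify that $\bar g$ is a bona fide warped product, namely that the normal-flow identification of all level sets with a fixed model fibre $(\Sigma,g_\Sigma)$ is smooth on $M\setminus(f^{-1}(a)\cup f^{-1}(b))$, which is precisely where $D_NN=0$ and the fibrewise constancy of $|\nabla f|$ enter. If one prefers to avoid coordinates, the same conclusion is reached by writing $\bar g=N^\flat\otimes N^\flat+|\nabla f|^2\bigl(g-N^\flat\otimes N^\flat\bigr)$ and computing $\mathcal{L}_{\nabla f}\bar g$ through the Leibniz rule together with $\mathcal{L}_{\nabla f}N^\flat$ and $\nabla f(|\nabla f|^2)$; here the expected obstacle is controlling the cross terms, but these collapse exactly because $\nabla f$ is normal to the fibres and $|\nabla f|$ is constant along them, again leaving $N(|\nabla f|)\,\bar g$.
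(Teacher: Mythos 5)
Your proof is correct, but it follows a different route from the paper's. The paper never passes to coordinates in this lemma: it computes $\tfrac12\mathcal{L}_{\nabla f}\bar g$ directly, term by term, from the definition $\bar g=\tfrac{df}{|\nabla f|}\otimes\tfrac{df}{|\nabla f|}+|\nabla f|^2 g_\Sigma$. The key computational input there is the static equation combined with $i_{\nabla f}z=\a\,df$, which gives $Ddf(X,\nabla f)=\bigl(f\a-\tfrac{sf}{n(n-1)}\bigr)df(X)$ and hence $\tfrac12\mathcal{L}_{\nabla f}(df\otimes df)=2N(|\nabla f|)|\nabla f|\,\tfrac{df}{|\nabla f|}\otimes\tfrac{df}{|\nabla f|}$, while $\tfrac12\mathcal{L}_{\nabla f}(|\nabla f|^2 g_\Sigma)=Ddf(\nabla f,\nabla f)g_\Sigma$; the paper also records the explicit value $N(|\nabla f|)=f\a-\tfrac{sf}{n(n-1)}$, which is reused later. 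You instead introduce the parameter $t$ with $dt=df/|\nabla f|$, observe $\nabla f=\bar\nabla f=\rho\,\partial_t$, and reduce everything to the standard warped-product Hessian computation, where the conformal factor appears tautologically as $\rho'=N(|\nabla f|)$. Your version is conceptually cleaner (it exhibits the lemma as the standard concircularity of $f(t)$ on a warped product over an interval) and isolates exactly where $\omega=0$ enters (via $D_NN=0$ and the fibrewise constancy of $|\nabla f|$); the paper's version stays within the tensor formalism, avoids having to justify the adapted coordinates at this stage (it only invokes them afterwards, in Corollary~\ref{lem2020-6-30-1}, citing Tashiro), and produces the explicit formula for $N(|\nabla f|)$ needed downstream. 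Both arguments rest on the same interpretation of $g_\Sigma$ as the fixed fibre metric transported along the normal flow, i.e.\ $\mathcal{L}_{\nabla f}g_\Sigma=0$, which you rightly flag as the only delicate point of the setup.
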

\begin{proof}
Note that, by (\ref{eqn2020-1-7-11}) we have
$$ 
\frac 12 {\mathcal L}_{\nabla f} g  =D_gdf = f z -\frac {sf}{n(n-1)}g.
$$
Moreover, the following identity holds:
\be 
N(|\nabla f|)= f\a -\frac {sf}{n(n-1)}.\label{eqnt3}
\ee
From these properties together with the definition of Lie derivative, we obtain
\bea
\frac{1}{2}{\mathcal L}_{\nabla f}(df \otimes df)(X,Y)&=& Ddf(X, \nabla f)df(Y)+df(X)Ddf(Y, \nabla f)\\
&=& 2\left( f\a -\frac {sf}{n(n-1)}\right) \, df\otimes df(X, Y).
\eea
Therefore,
\bea
 \frac 12 {\mathcal L}_{\nabla f}\left( \frac {df}{|\nabla f|} \otimes \frac {df}{|\nabla f|} \right) = N(|\nabla f|)  \frac {df}{|\nabla f|} \otimes \frac {df}{|\nabla f|}. %\label{eqnt2}
\eea
Since
$$ 
\frac 12 {\mathcal L}_{\nabla f} (|\nabla f|^2 g_{\Sigma})
= \frac 12 \nabla f(|\nabla f|^2) g_\Sigma
= Ddf(\nabla f, \nabla f)g_\Sigma= N(|\nabla f|) |\nabla f|^2 g_\Sigma,
$$
we can conclude  that
$$ 
\frac 12 {\mathcal L}_{\nabla f}\bar{g}=\bar{D}df= N(|\nabla f|) \bar{g}.
$$
In particular, we have $\bar{\tr} f =n N(|\nabla f|) $.
\end{proof}

\begin{corollary}\label{lem2020-6-30-1}
Let $(g,f)$ be a non-trivial solution of the static vacuum equation on
 an $n$-dimensional compact manifold $M$ satisfying  $\o = 0$.  Then level hypersurfaces given by $f$ are homothetic to each other.
\end{corollary}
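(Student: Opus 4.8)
The plan is to produce the homothety explicitly as the gradient flow of $f$, exploiting the conformal Killing property from Lemma~\ref{lemt1}. Let $\phi_t$ be the local flow of $\nabla f$ on the regular region $M\setminus(f^{-1}(a)\cup f^{-1}(b))$. First I would check that $\phi_t$ preserves the foliation by level sets: along an integral curve $\frac{d}{dt}f(\phi_t(p))=|\nabla f|^2(\phi_t(p))$, and since $|\nabla f|^2$ is constant on each level set (as noted in the proof of Lemma~\ref{cor191-1}), the value $c(t):=f(\phi_t(p))$ satisfies an autonomous equation $c'(t)=h(c(t))$ whose right-hand side depends only on the current level. By uniqueness of solutions $c(t)$ is independent of the chosen point, so $\phi_t$ carries $f^{-1}(c)$ diffeomorphically onto $f^{-1}(c(t))$.

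Next I would integrate the conformal Killing equation of Lemma~\ref{lemt1}, namely $\mathcal{L}_{\nabla f}\bar{g}=2\rho\,\bar{g}$ with $\rho=N(|\nabla f|)=f\alpha-\frac{sf}{n(n-1)}$. Since $\frac{d}{dt}\phi_t^*\bar{g}=\phi_t^*(\mathcal{L}_{\nabla f}\bar{g})=2(\rho\circ\phi_t)\,\phi_t^*\bar{g}$, this integrates pointwise to $\phi_t^*\bar{g}=\Lambda_t\,\bar{g}$ with $\Lambda_t(p)=\exp\bigl(2\int_0^t\rho(\phi_s(p))\,ds\bigr)$. The decisive point is that $\rho$ is constant on each level set: $f$ is constant there, $s$ is a global constant, and $\alpha=z(N,N)$ is constant on level sets by Lemma~\ref{lem2020-1-7-10}. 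Together with the foliation-preserving property just established, the integrand $\rho(\phi_s(p))$ then depends only on $s$ and not on the choice of $p$ in $f^{-1}(c)$, so $\Lambda_t$ is constant along $f^{-1}(c)$.

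Finally, restricting to the directions tangent to a level set, where $\bar{g}$ reduces to $|\nabla f|^2$ times the metric induced by $g$ and $|\nabla f|^2$ is itself a level-set constant, I would conclude that $\phi_t|_{f^{-1}(c)}$ pulls back the induced metric of $f^{-1}(c(t))$ to a constant multiple $\tfrac{\Lambda_t\,|\nabla f|^2_{c}}{|\nabla f|^2_{c(t)}}$ of that of $f^{-1}(c)$; that is, it is a homothety. Composing and inverting such maps connects any two regular level sets, which proves the claim. I do not anticipate a serious obstacle, since the statement is structural once Lemma~\ref{lemt1} is in hand; the only step requiring genuine care is the constancy of the conformal factor $\Lambda_t$ on each level set, which is precisely where the earlier constancy results for $|\nabla f|^2$, $\alpha$ and $s$ enter, and one must keep the argument on the regular region because $\bar{g}$ fails to be smooth across $f^{-1}(a)\cup f^{-1}(b)$ by Theorem~\ref{thm2019-12-23-1}.
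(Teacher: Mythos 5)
Your first two steps are sound and in fact reconstruct, by integrating the conformal Killing equation along the gradient flow, the same warped-product normal form for $\bar g$ that the paper obtains by citing the Ishihara--Tashiro theorem for concircular scalar fields; up to that point the two arguments are essentially equivalent, yours being the more explicit. The computation $\phi_t^*\bar g=\Lambda_t\,\bar g$ with $\Lambda_t$ constant on each level set is correct, as is the reduction of the constancy of the conformal factor to the fiberwise constancy of $f$, $s$ and $\alpha$.

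The gap is in the last step, where you identify the tangential part of $\bar g$ on an arbitrary level set with $|\nabla f|^2$ times the metric induced by $g$ there. By (\ref{eqn2019-8-27-1}) the fiber part of $\bar g$ is $|\nabla f|^2 g_\Sigma$, where $g_\Sigma$ is the induced metric of one fixed level set $\Sigma=f^{-1}(c)$, extended to the regular region as a flow-invariant tensor (this invariance, $\mathcal L_{\nabla f}g_\Sigma=0$, is exactly what the proof of Lemma~\ref{lemt1} uses when it computes $\tfrac12\mathcal L_{\nabla f}(|\nabla f|^2 g_\Sigma)=\tfrac12\nabla f(|\nabla f|^2)\,g_\Sigma$). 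Hence on a level set $f^{-1}(c')\neq\Sigma$ the tangential part of $\bar g$ is $|\nabla f|^2_{c'}$ times the flow-pullback of $g|_\Sigma$, not $|\nabla f|^2_{c'}$ times $g|_{f^{-1}(c')}$; these two agree precisely when the level sets are already homothetic under the flow, which is the statement to be proved. Concretely, plugging tangential vectors into $\phi_t^*\bar g=\Lambda_t\bar g$ only returns the identity $\Lambda_t=|\nabla f|^2_{c(t)}/|\nabla f|^2_{c}$, a statement about $\bar g$ alone that carries no information about $g$. To pass from $\bar g$ to $g$ one needs to know that $Ddf=fz-\tfrac{sf}{n(n-1)}g$ is proportional to $g$ in tangential directions, i.e.\ that the level sets are umbilic with a factor constant on each level set (equivalently $z(E_i,E_j)=-\tfrac{\alpha}{n-1}\delta_{ij}$); in the paper this input is supplied only later, in the proof of Theorem~\ref{lem2020-9-10-1}, via the curvature identity of Lemma~\ref{lem2018-8-15-4}, and it is not a formal consequence of Lemma~\ref{lemt1} together with the flow argument.
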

\begin{proof}
By Lemma~\ref{lemt1}, the gradient vector field $\n f$ is a conformal vector field with respect to the metric $\bar g$. Thus, as arguments in
\cite{tas2} or \cite{tas}, we can choose a local coordinate system $(u^i)$ in a neighborhood of any hypersurface $\Sigma:=f^{-1}(c)$, $a<c<b$  such that
\bea
\bar g = (du^1 )^2 + |\n f|^2 \eta_{ij}(u^2, \cdots, u^n) du^i \otimes du^j, %\label{eqn2020-1-17}
\eea
where $du^1 = \frac{df}{|\n f|}$ and
 the functions $\eta_{ij}$ depend only on $u^2, \cdots, u^n$.
 Comparing this to (\ref{eqn2019-8-27-1}), we have
 $$
 g_{{\Sigma}} = \eta_{ij}(u^2, \cdots, u^n) du^i \otimes du^j.
 $$
 These show that level hypersurfaces are homothetic to each other and to $\Sigma$ with $\eta_{ij}(u^2, \cdots, u^n) du^i \otimes du^j$ as metric form.
\end{proof}

\begin{theorem} \label{lem2020-9-10-1}
Let $(g,f)$ be a non-trivial solution of the static vacuum equation on an $n$-dimensional compact manifold $M$ satisfying  $\o = 0$.  
 Then  $(M, g)$ is Bach-flat.
\end{theorem}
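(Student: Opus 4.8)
The plan is to reduce Bach-flatness to the vanishing of the structural tensor $T$: it is shown in \cite{h-y} (cf. the proof of Theorem~\ref{thm2018-1-20-11}) that a vacuum static space with $T=0$ is Bach-flat, so it suffices to prove $T\equiv 0$. Since $(M,g)$ has positive isotropic curvature, Theorem~\ref{thm2018-5-16-2} gives $\omega = df\wedge i_{\n f}z = 0$, so every structural identity of Section 4 is at our disposal; in particular $i_{\n f}z = \a\,df$ with $\a = z(N,N)$ and $D_NN=0$ (Lemma~\ref{Cotz}), and from the proof of Lemma~\ref{lem7} the tensor $T$ takes the explicit form
\[
T = \frac{1}{n-2}\,df\wedge\left(z + \frac{\a}{n-1}g\right).
\]
Contracting with $N$ shows that $T=0$ holds at a point with $\n f\ne 0$ if and only if $z|_{N^\perp} = -\frac{\a}{n-1}\,g|_{N^\perp}$, i.e. the restriction of the Ricci tensor to the level sets of $f$ is pure trace. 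Thus the whole problem reduces to establishing this pointwise proportionality on a dense set.

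Next I would exploit the warped product structure established in Section 4. By Lemma~\ref{lemt1} the vector field $\n f$ is conformal Killing for the auxiliary metric $\bar g$, and Corollary~\ref{lem2020-6-30-1} shows that the level hypersurfaces of $f$ are mutually homothetic, realized along the flow of $N$. Since $D_NN=0$, this flow consists of unit-speed geodesics orthogonal to the level sets, so in the geodesic parameter $t$ (with $dt = df/|\n f|$) the metric attains the genuine warped product form $g = dt^2 + \mu(t)^2 g_\Sigma$ on $M\setminus(f^{-1}(a)\cup f^{-1}(b))$, where $a=\min_M f$, $b=\max_M f$ and $\Sigma = f^{-1}(c)$ is a fixed regular level with fiber metric $g_\Sigma$. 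Writing $f=f(t)$ and computing the Hessian of $f$ on directions $X,Y$ tangent to the fibers for this warped product gives $Ddf(X,Y) = \frac{\mu'f'}{\mu}\,g(X,Y)$, a pure multiple of $g$ along the level sets.

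Comparing this with the vacuum static equation $Ddf = fz - \frac{sf}{n(n-1)}g$ coming from (\ref{eqn2020-1-7-11}), we obtain, at every point with $f\ne 0$,
\[
f\,z(X,Y) = \left(\frac{\mu'f'}{\mu} + \frac{sf}{n(n-1)}\right)g(X,Y)\qquad (X,Y\perp\n f),
\]
so that $z|_{N^\perp}$ is proportional to $g|_{N^\perp}$; taking the trace and using $z(N,N)=\a$ and $\operatorname{tr}z=0$ fixes the factor as $-\frac{\a}{n-1}$. Hence $T=0$ on the open set $\{f\ne 0\}\setminus(f^{-1}(a)\cup f^{-1}(b))$. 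Because $a<0<b$ (indeed $\int_M f=0$), the complement $f^{-1}(0)\cup f^{-1}(a)\cup f^{-1}(b)$ has measure zero by Theorem~\ref{thm2019-12-23-1}, so this set is dense; as $T$ is a smooth tensor, $T\equiv 0$ on all of $M$, and Bach-flatness follows from the cited result of \cite{h-y}.

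The main obstacle is the passage to the genuine warped product form $g = dt^2 + \mu(t)^2 g_\Sigma$. Corollary~\ref{lem2020-6-30-1} supplies only homothety of the level sets, and one must verify that the normal geodesic flow realizes these homotheties with a single, fixed fiber metric and that the local coordinate descriptions patch consistently over the regular region, degenerating only on the measure-zero minimum, maximum and zero sets. The behaviour at $f^{-1}(a)$ and $f^{-1}(b)$, which by Theorem~\ref{thm2019-12-23-1} may be single points or totally geodesic hypersurfaces, and at $f^{-1}(0)$, where the fiber Hessian computation gives no information because $Ddf$ vanishes there, is precisely what forces the concluding density-and-continuity argument rather than a purely pointwise one.
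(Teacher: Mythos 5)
Your proposal is correct and follows essentially the same route as the paper: both reduce Bach-flatness to $T=0$ via the result of \cite{h-y}, invoke Theorem~\ref{thm2018-5-16-2} to get $\omega=0$, put the metric in warped-product form using Lemma~\ref{lem2018-8-15-4} and Corollary~\ref{lem2020-6-30-1}, and compare the fiber part of $Ddf$ (which the paper writes as $\tfrac12\mathcal{L}_{\nabla f}g$) with the static equation to conclude that $z$ restricted to the level sets is pure trace, whence $T=0$ by Lemma~\ref{lem2019-5-28-5}. Your added care about dividing by $f$ and extending across $f^{-1}(0)\cup f^{-1}(a)\cup f^{-1}(b)$ by density and continuity addresses a point the paper glosses over, but it is not a different method.
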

\begin{proof}
It follows from Lemma~\ref{lem2018-8-15-4} that
$$
\frac{s}{n(n-1)} g_{{\Sigma}}  = R_N + \left(z - \a \frac{df}{|df|}\otimes \frac{df}{|df|}\right)  +\frac{f}{|\n f|^2}i_{\n f}C
$$
for each level hypersurface $\Sigma$, and 
$$
 g  =  \frac{df}{|df|}\otimes \frac{df}{|df|} + \frac{n(n-1)}{s} \left[R_N + \left(z -  \a \frac{df}{|df|}\otimes \frac{df}{|df|}\right)
  +\frac{f}{|\n f|^2}i_{\n f}C\right].
  $$
Since $\o = 0$, the second term depends only on the $\n f$-direction by Lemma~\ref{lem2020-8-23-2}, and so,
by Corollary~\ref{lem2020-6-30-1}, the  metric $g$  can also be written as
\be
g= \frac {df}{|\nabla f|}\otimes \frac {df}{|\nabla f|}+ \xi(f)^2 g_0,\label{eqn2020-6-28-10}
\ee
where  $g_0 = g_{\Sigma_0}$ is the induced metric on $f^{-1}(0)$. Since
$$
\frac 12 {\mathcal L}_{\nabla f}\left( \frac {df}{|\nabla f|} \otimes \frac {df}{|\nabla f|} \right) = N(|\nabla f|)  \frac {df}{|\nabla f|} \otimes \frac {df}{|\nabla f|}
$$
and
$$ 
\frac 12 {\mathcal L}_{\nabla f}(\xi^2 g_0)=\xi\langle \nabla f, \nabla \xi\rangle g_0 
= \xi|\nabla f|^2 \frac {d\xi}{df}g_0,
$$
we have
\be
\frac 12 {\mathcal L}_{\nabla f}g=N(|\nabla f|)\frac {df}{|\nabla f|}\otimes \frac {df}{|\nabla f|}+\xi|\nabla f|^2 \frac{d\xi}{df}g_0.\label{eqnt5-1}
\ee
On the other hand, from the static vacuum equation together with (\ref{eqnt3}) and (\ref{eqn2020-6-28-10}),  we have
\bea
\frac 12 {\mathcal L}_{\nabla f}g 
&=&
 Ddf =fz -\frac {sf}{n(n-1)}g\\
&=& 
N(|\nabla f|) \frac {df}{|\nabla f|}\otimes \frac {df}{|\nabla f|} +fz-f\a \frac {df}{|\nabla f|}\otimes \frac {df}{|\nabla f|}  -\frac {sf}{n(n-1)}\xi^2 g_0.
\eea
Comparing this to (\ref{eqnt5-1}), we obtain
\be
\left( \xi|\nabla f|^2 \frac {d\xi}{df} +\frac {sf}{n(n-1)}\xi^2\right) g_0
=f\left(z-\a  \frac {df}{|\nabla f|}\otimes \frac {df}{|\nabla f|}\right)\label{eqnt7-1}
\ee
as $(n-1)\times (n-1)$ matrices.

Now, choosing a local frame  $\{E_1, E_2, \cdots, E_n\}$ with $E_1=N$,  we have
$$ 
\xi|\nabla f|^2 \frac {d\xi}{df}=fz(E_i, E_i)-\frac {sf}{n(n-1)}\xi^2
$$
for each $2\leq j\leq n$. Summing up these, we obtain
\be
(n-1)\xi|\nabla f|^2 \frac {d\xi}{df}= -f\a -\frac {sf}n\xi^2. \label{eqn2020-6-28-11-1}
\ee
Substituting this into (\ref{eqnt7-1}), we get
$$
 -\frac {\a}{n-1} g_0= z-\a  \frac {df}{|\nabla f|}\otimes \frac {df}{|\nabla f|}
 $$
 as $(n-1)\times (n-1)$ matrices.
This implies that, on each level hypersurface $f^{-1}(c)$, we have
$$ 
z(E_i, E_j)=-\frac {\a}{n-1}
$$
for $2\leq j\leq n$. Hence,
$$ 
|z|^2=\a^2+\frac {\a^2}{n-1}=\frac n{n-1}\a^2=\frac n{n-1}|i_Nz|^2,
$$
since $z(N, E_i)=0$ for $i\geq 2$. As a result, it follows  from 
Lemma~\ref{lem2019-5-28-5} that $T=0.$
So, $(M^n, g)$ is a Bach-flat static vacuum space. In case of dimension $n=3$, the vanishing of $T$ also implies the vanishing of the Cotton tensor $C$.
\end{proof}

Combining a result due to Qing and Yuan \cite{QY} with Theorem~\ref{lem2020-9-10-1}, we obtain the following.

\begin{theorem} \label{thm2019-6-22-1}
Let $(g,f)$ be a non-trivial solution of the static vacuum equation on an $n$-dimensional compact manifold $M$ satisfying  $\o = 0$.  
Let ${\min_M f = a}$. Suppose $f^{-1}(a)$ is a single point.  Then $M$ is, up to finite cover, isometric to ${\Bbb S}^n$.
\end{theorem}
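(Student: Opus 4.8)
The plan is to combine the structural results already established with the Bach-flatness just proved. By Theorem~\ref{lem2020-9-10-1}, the space $(M,g)$ is Bach-flat, and since positive isotropic curvature forces positive scalar curvature, we are in the setting of the Qing--Yuan classification (invoked via Theorem~\ref{thm2018-1-20-11}). That result says that, up to finite quotient and scaling, $M$ is either isometric to ${\Bbb S}^n$ or to a warped product ${\Bbb S}^1\times_\xi \Sigma^{n-1}$ over a circle with Einstein fiber. So the entire task reduces to ruling out the warped-product alternative under the hypothesis that $f^{-1}(a)$ is a single point, after which only the sphere survives.

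The key observation is that the warped product ${\Bbb S}^1\times_\xi \Sigma^{n-1}$ has no isolated critical points of the potential $f$: along such a product the level sets of $f$ are precisely the fibers $\{t\}\times\Sigma$, and the minimum set is a full fiber $\Sigma^{n-1}$, hence a hypersurface rather than a point. More precisely, in the ${\Bbb S}^1\times_\xi\Sigma$ case the function $f$ depends only on the circle coordinate $t$, so $\n f$ vanishes on entire fibers, forcing $f^{-1}(a)$ to be the hypersurface $\{t=t_{\min}\}\times\Sigma$. This directly contradicts the standing assumption that $f^{-1}(a)$ is a single point. Therefore the warped-product case cannot occur, and Theorem~\ref{thm2018-1-20-11} leaves only the possibility that $M$ is isometric to ${\Bbb S}^n$ up to finite cover.

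The main obstacle I anticipate is making the dichotomy clean: I must be sure that ``$f^{-1}(a)$ is a single point'' is genuinely incompatible with the warped-product conclusion, rather than merely seeming so. The cleanest route is to appeal to the structural results already in hand. By Lemma~\ref{cor191-1} the only critical points of $f$ are its maximum and minimum, and Theorem~\ref{thm2019-12-23-1} shows that each of $f^{-1}(a)$ and $f^{-1}(b)$ is either a single point or a totally geodesic hypersurface, with the crucial linking statement that $f^{-1}(a)$ is a single point if and only if $f^{-1}(b)$ is. Thus the hypothesis ``$f^{-1}(a)$ is a point'' forces both extremal sets to be points, so $f$ has exactly two critical points. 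A nontrivial warping ${\Bbb S}^1\times_\xi\Sigma$ cannot carry such an $f$, since there $f$ is a function of the circle coordinate whose minimum is attained on a whole fiber. Hence the Qing--Yuan dichotomy collapses to the sphere.

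In summary, the argument runs as follows. First invoke Theorem~\ref{lem2020-9-10-1} to get $B=0$, then feed this into Theorem~\ref{thm2018-1-20-11} to obtain the dichotomy sphere versus warped product. Next use Theorem~\ref{thm2019-12-23-1} (together with Lemma~\ref{cor191-1}) to translate the hypothesis ``$f^{-1}(a)$ is a single point'' into ``$f$ has precisely two critical points, both isolated.'' Finally observe that the warped-product alternative is incompatible with $f$ having an isolated minimum, since in that geometry the minimum set of $f$ is a whole fiber. Eliminating the warped product leaves $M$ isometric to ${\Bbb S}^n$ up to finite cover, which is the desired conclusion.
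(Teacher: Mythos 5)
Your proposal is correct and follows essentially the same route as the paper: Bach-flatness from Theorem~\ref{lem2020-9-10-1} feeds into the Qing--Yuan dichotomy, and the hypothesis that $f^{-1}(a)$ is a single point (combined with Lemma~\ref{cor191-1} and Theorem~\ref{thm2019-12-23-1}) rules out the warped-product alternative, whose potential has extremal sets that are whole fibers. The paper treats this theorem as exactly such a corollary (case (i) of Theorem~\ref{lem2020-1-7-16} additionally cites Tashiro's theorem via Lemma~\ref{lemt1}), so there is no gap.
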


\begin{theorem} \label{lem2020-1-7-16}
Let $(g,f)$ be a non-trivial solution of the static vacuum equation on an $n$-dimensional compact manifold $M$ 
  satisfying  $\o = 0$.  
 Then  up to finite cover, either $M$ is isometric to ${\Bbb S}^{n}$ or a warped product ${\Bbb S}^1 \times_\xi {\Sigma}^{n-1}$, where $\Sigma$ is a compact Einstein manifold of positive Ricci curvature.
\end{theorem}
\begin{proof}
By Theorem~\ref{lem2020-9-10-1}, $(M, g)$ is a Bach-flat static vacuum space. In dimension three, we use the fact $C = 0$.
By applying a result due to Qing and Yuan \cite{QY} together with the fact that the scalar curvature is positive,
 $(M, g)$, up to finite cover and  scaling, is isometric to ${\Bbb S}^n$, or 
to the warped product ${\Bbb S}^1 \times_\xi \Sigma$. 
\end{proof}

\begin{remark}\label{rem2024-7-1-1}
{\rm 
In the conclusion of Theorem~\ref{lem2020-1-7-16}, there are two cases by Theorem~\ref{thm2019-12-23-1}:
\begin{itemize}
\item[(i)] $f^{-1}(a)$ is a single point.

In this case, the maximal level set $f^{-1}(b)$ is also a single point,
 and so $(M, g)$ is isometric to ${\Bbb S}^n$ up to a finite cover.
In fact, applying a result in \cite{tas} together with Lemma~\ref{lemt1}, we can see that $M$ is conformal to an $n$-dimensional spherical space.
 
 \item[(ii)]  Both $f^{-1}(a)$ and $f^{-1}(b)$ are hypersurfaces.

In this case, we have   $\a = - \frac{s}{n}$ on $f^{-1}(a)$ and $f^{-1}(b)$ by (\ref{eqn2024-8-26-1}) in Appendix. However, it follows from (\ref{eqn2020-6-28-11-1}) together with
$\n f = 0$ on $f^{-1}(a)$ and $f^{-1}(b)$ that
$$
\a  = - \frac{s}{n} \xi^2
$$
which implies $\xi = 1$ on $f^{-1}(a)$ and $f^{-1}(b)$.
\end{itemize}
}
\end{remark}

In the next section, we will show that $\xi = 1$ on the whole $M$ when  both $f^{-1}(a)$ and $f^{-1}(b)$ are hypersurfaces.

\section{Product static vacuum spaces}

%\vspace{.2in}
%\noindent
In  this section, we will prove the following.

\begin{theorem} \label{lem2020-1-7-16-2}
Let $(g,f)$ be a non-trivial solution of the static vacuum equation on an $n$-dimensional compact manifold $M$ 
  satisfying  $\o = 0$.  For $\min_{x\in M} f(x) =a$ and $\max_{x\in M} f(x) = b$, if both $f^{-1}(a)$ and $f^{-1}(b)$ are 
  hypersurfaces, then,  up to finite cover,  $M$ is isometric to  a product ${\Bbb S}^1 \times {\Sigma}^{n-1}$, where $\Sigma$ is a compact Einstein manifold of positive Ricci curvature.
\end{theorem}

As mentioned in the end of the previous section, it suffices to show that $\xi \equiv 1$ on $M$.
 Recall  that 
$$
\a = -\frac{s}{n}
$$
on the set $f^{-1}(a)$ and $f^{-1}(b)$ when both $f^{-1}(a)$ and $f^{-1}(b)$ are hypersurfaces. 
Our first observation in this case is  the following.

\begin{lemma}\label{lem2020-7-6-1}
If the function $\dis{\a+\frac{s}{n}}$ has a minimum on $M\setminus \{f^{-1}(a)\cup f^{-1}(b)\}$, then it  attains its minimum on the set $f^{-1}(0)$.
\end{lemma}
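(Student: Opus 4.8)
The plan is to exploit the fact that $\a$, and hence $\beta := \a + \frac{s}{n}$, is constant on each level set of $f$ (Lemma~\ref{lem2020-1-7-10}), so that $\beta$ descends to a single-variable function $\beta = \beta(f)$ on the open interval $(a,b)$. Since $f$ has no critical points in $M \setminus (f^{-1}(a)\cup f^{-1}(b))$ by Lemma~\ref{cor191-1}, we have $|\n f|^2 > 0$ there and $\beta$ is smooth as a function of $f$ on $(a,b)$. First I would convert the divergence identity (\ref{eqn2}) into an ordinary differential equation for $\beta$. Writing $\n f(\a) = \a'(f)|\n f|^2$ and using $|z|^2 = \frac{n}{n-1}\a^2$ (established in the proof of Theorem~\ref{lem2020-9-10-1}), equation (\ref{eqn2}) becomes
$$
|\n f|^2\,\beta'(f) = \frac{n}{n-1}\,f\,\a\,\beta ,
$$
since $\beta' = \a'$ and $n\a + s = n\beta$.

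Next I would locate the minimum. Because $\min_M f = a < 0 < b = \max_M f$ (as $\Delta f = -\frac{s}{n-1}f$ forces $\int_M f = 0$), the value $0$ lies in $(a,b)$ and $f^{-1}(0)$ is a regular level set. If $\beta$ attains a minimum at some $f_0 \in (a,b)$, then $\beta'(f_0) = 0$, and since $|\n f|^2 > 0$ there, the ODE above gives $f_0\,\a(f_0)\,\beta(f_0) = 0$. Thus at least one of $f_0 = 0$, $\a(f_0)=0$, $\beta(f_0)=0$ holds, and I would treat these three possibilities. The case $f_0 = 0$ is precisely the desired conclusion.

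The main obstacle is the case $\beta(f_0) = 0$ with $f_0 \neq 0$. Here I would view the displayed identity as a regular first-order equation $\beta'(f) = G(f,\beta)$ with
$$
G(f,\beta) = \frac{n\,f\,\beta\bigl(\beta - \tfrac{s}{n}\bigr)}{(n-1)\,|\n f|^2} ;
$$
since $|\n f|^2 > 0$ on all of $(a,b)$, the right-hand side is smooth, hence locally Lipschitz in $\beta$. As $G(f,0)\equiv 0$, the constant $\beta\equiv 0$ is a solution, so uniqueness for the initial value problem forces $\beta \equiv 0$ on $(a,b)$ whenever $\beta$ vanishes at an interior point; in particular $\beta(0)=0$ is the minimum and it is attained on $f^{-1}(0)$. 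For the remaining case $\a(f_0)=0$ one has $\beta(f_0)=\frac{s}{n}$, which is the maximal value of $\beta$ since $\a \le 0$ on $M$ (so that $\beta \le \frac{s}{n}$); hence $\beta$ being minimized there forces $\beta \equiv \frac{s}{n}$ to be constant, and again its minimum is attained on $f^{-1}(0)$. Combining the three cases yields the claim.
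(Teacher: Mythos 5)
Your argument is correct and rests on the same key identity as the paper's own proof: combining (\ref{eqn2}) with $|z|^2=\frac{n}{n-1}\a^2$ and the vanishing of $\n f(\a)$ at an interior minimum to obtain $\frac{n}{n-1}f\,\a\left(\a+\frac{s}{n}\right)=0$ there. The only divergence is in the degenerate cases: where the paper rules out the factors $\a=0$ and $\a+\frac{s}{n}=0$ by asserting that a non-constant $\a+\frac{s}{n}$ (which vanishes on $f^{-1}(a)\cup f^{-1}(b)$) has strictly negative interior minimum, you dispose of them via ODE uniqueness and the bound $\a\le 0$ --- a somewhat more careful treatment, but essentially the same route.
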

\begin{proof}
Recall that $\dis{\a+\frac{s}{n} =0}$ on the set  ${f^{-1}(a)\cup f^{-1}(b)}$. We may assume that $\dis{\a+\frac{s}{n}}$ is not constant (If $\dis{\a+\frac{s}{n}}$ is constant, then it is obvious since $\dis{\a+\frac{s}{n} = 0}$ on $M$).
Suppose  that $\dis{\a + \frac{s}{n}}$ attains its minimum on the set $f^{-1}(c)$ with $a<c<b$.
Then
$$
\a+\frac{s}{n}<0, \quad N(\a) = 0\quad \mbox{and}\quad NN(\a) \ge 0
$$
on the set $f^{-1}(c)$. From 
$$
\dis{f|z|^2 = \n f (\a) - \frac{sf\a}{n-1}}\quad \mbox{or}\quad  \dis{\frac{n}{n-1}f\a^2 = \n f(\a) - \frac{sf\a}{n-1}}
$$
 (recall that $T=0$ and $|z|^2  = \frac{n}{n-1}\a^2$),
we have, on the set $f^{-1}(c)$, 
$$
\frac{nc}{n-1}\a^2 +\frac{c\a s}{n-1} =0, \quad i.e.,\quad \frac{nc}{n-1}\a \left(\a + \frac{s}{n}\right) = 0,
$$
which implies that $c = 0$. 
\end{proof}

\begin{lemma}\label{lem2020-7-2-4}
Let $(g,f)$ be a non-trivial solution of the static vacuum equation  on an $n$-dimensional compact manifold $M$  satisfying  $\o = 0$.  Then, we have $\dis{\a  + \frac{s}{n} \ge 0}$  on $M$.
\end{lemma}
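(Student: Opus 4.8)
The plan is to argue by contradiction, assuming $\a+\frac sn$ takes a negative value somewhere, and to extract the contradiction from the behaviour of $\a$ near the extremal fibres of $f$. Set $\beta:=\a+\frac sn$; since we work in the PIC setting, Theorem~\ref{lem2020-9-10-1} gives $T=0$, hence $|z|^2=\frac n{n-1}\a^2$, and feeding this into (\ref{eqn2}) yields the first--order transport law $\n f(\a)=\frac{nf\a}{n-1}\,\beta$. Because $\a$ (Lemma~\ref{lem2020-1-7-10}), and therefore $\beta$, is constant on each level set, I would regard $\beta$ as a function $\beta=\beta(t)$ of $t=f$ on the regular range $a<t<b$, where it solves the linear ODE $\beta'(t)\,|\n f|^2=\frac{nt\,\a}{n-1}\,\beta$. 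Recall $\beta=0$ on $f^{-1}(a)\cup f^{-1}(b)$, where $\a=-\frac sn$. If $\min_M\beta<0$, then, since $\beta$ vanishes on the extremal sets, the minimum is interior, and by Lemma~\ref{lem2020-7-6-1} it is attained on $f^{-1}(0)$. This already shows that the naive maximum principle cannot close the argument: at such a minimum both $\n f(\beta)$ and the normal Hessian are perfectly consistent with a genuine interior minimum of $\beta(t)$ at $t=0$. So the decisive information must come from the endpoints.

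First I would analyse the ODE as $t\to b^-$. From (\ref{eqnt3}) one gets $\frac{d}{dt}|\n f|^2=2t\bigl(\a-\frac s{n(n-1)}\bigr)$, so $|\n f|^2$ vanishes to first order at $t=b$ with nonzero speed (using $\a=-\frac sn$ there), and the transport coefficient $\frac{nt\a}{(n-1)|\n f|^2}$ has a simple pole $\sim-\frac1{2(b-t)}$. Integrating the linear ODE then forces the fractional asymptotics $\beta(t)\sim C\,(b-t)^{1/2}$ as $t\to b^-$, with $C\neq0$ whenever $\beta\not\equiv0$ (and $C<0$ along any side on which $\beta<0$); the same analysis applies at $t=a$.

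The key step is to confront this fractional vanishing with a genuinely smooth quantity. I would introduce $\widehat\beta:=z(\n f,\n f)+\frac sn|\n f|^2=\beta\,|\n f|^2$, which, being built from the smooth tensor $z$ and the smooth field $\n f$, is a smooth function on all of $M$. Near a totally geodesic extremal fibre $f^{-1}(b)$ (totally geodesic by Theorem~\ref{thm2019-12-23-1}), the normal arc length $r$ satisfies $b-f\approx\frac\kappa2 r^2$ with $\kappa=-Ddf(\nu,\nu)>0$, so the asymptotics above translate into $\widehat\beta\sim c\,|r|^3$ with $c\neq0$. A smooth function cannot have such a $|r|^3$ leading term along a normal geodesic; matching the one--sided expansions of $\widehat\beta$ across the fibre, which are governed by the two coefficients $C$ produced on its two sides, forces those coefficients to vanish. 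Hence $\beta\equiv0$ near the fibre, and then $\beta\equiv0$ on the whole component by uniqueness for the linear ODE on the regular region, contradicting $\min_M\beta<0$. This would establish $\a+\frac sn\ge0$.

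The main obstacle is exactly this endpoint matching: the transport equation degenerates where $|\n f|\to0$, so one must justify the fractional asymptotics $\beta\sim C(b-t)^{1/2}$ rigorously and then control the signs of the two one--sided constants $C$. Here I would exploit that the extremal fibres are totally geodesic, so that $f$, and hence $\widehat\beta$, is even in $r$ to leading order, together with the reflection symmetry of the underlying warped--product solution across these fibres; this identifies the two sides and makes a two--sided negative minimum incompatible with the smoothness of $\widehat\beta$.
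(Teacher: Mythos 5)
Your route is genuinely different from the paper's, which is very short: by Lemma~\ref{lem2020-7-6-1} a negative minimum of $\alpha+\frac{s}{n}={\rm Ric}(N,N)$ would be attained on the totally geodesic level set $f^{-1}(0)$, and the paper rules out ${\rm Ric}(N,N)<0$ there by combining the stability inequality for $f^{-1}(0)$ with the Fischer--Colbrie--Schoen existence of a positive solution of $\Delta^{\Sigma}\varphi+{\rm Ric}(N,N)\varphi=0$ and the maximum principle on the closed hypersurface. Your transport law $|\nabla f|^{2}\beta'=\frac{nt\alpha}{n-1}\beta$ is correctly derived (granting $T=0$ and $|z|^{2}=\frac{n}{n-1}\alpha^{2}$ from the PIC setting, an import the paper itself makes in Lemma~\ref{lem2020-7-6-1} even though the statement only assumes $\omega=0$), and the endpoint asymptotics $\beta\sim C(b-t)^{1/2}$ at a hypersurface extremal fibre are right, since $\frac{d}{dt}|\nabla f|^{2}=2t\left(\alpha-\frac{s}{n(n-1)}\right)\to-\frac{2bs}{n-1}\neq 0$ there.

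The gap is exactly the step you flag, and your proposed fix does not close it. Smoothness of $\widehat\beta=z(\nabla f,\nabla f)+\frac{s}{n}|\nabla f|^{2}$ along a normal geodesic, together with $\widehat\beta=O(|r|^{3})$, forces only the relation $c_{+}=-c_{-}$ between the two one-sided cubic coefficients (the third-order Taylor term is odd in $r$, while $|r|^{3}$ is even). This kills the configuration in which $\beta<0$ on both sides of the fibre, but it is perfectly consistent with $\beta<0$ on one side and $\beta>0$ on the other. That configuration is live at this stage: $M\setminus(f^{-1}(a)\cup f^{-1}(b))$ has two connected components (see the fibration over $S^{1}$ described in Theorem~\ref{thm2020-6-18}), $\beta$ solves the linear ODE separately on each, hence has a fixed sign on each component but a priori different signs on the two, and nothing you quote excludes $\beta_{1}<0$, $\beta_{2}>0$ with matched coefficients $c_{+}=-c_{-}$ at both extremal fibres. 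To exclude it you invoke a reflection symmetry of the solution across the extremal fibres; this is proved nowhere in the paper, is essentially part of the rigidity being established, and would itself require a unique-continuation argument for the static system with totally geodesic Cauchy data. A further, smaller issue is that your whole scheme presupposes that $f^{-1}(a)$ and $f^{-1}(b)$ are hypersurfaces, so that $\beta$ vanishes there and the minimum is interior; the point-extremum case is not addressed (the paper's reduction via Lemma~\ref{lem2020-7-6-1} also glosses over this, so I weight it lightly). As written, the proposal does not constitute a proof of the lemma.
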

\begin{proof}
By Lemma~\ref{lem2020-7-6-1}, it suffices to show that
$$
\a  + \frac{s}{n} \ge 0
$$
on the set $f^{-1}(0)$.
Suppose that $\dis{\a +\frac{s}{n} <0}$ on $f^{-1}(0)$ which implies that $\dis{{\rm Ric}_M(N, N) = \a +\frac{s}{n} <0}$ on $f^{-1}(0)$.
Since $\Sigma_0:= f^{-1}(0)$ is totally geodesic,   the stability operator for hypersurfaces with vanishing second fundamental form obviously becomes
$$
\int_{\Sigma_0} \left[|\n \vp|^2 - {\rm Ric}(N, N)\vp^2\right] \ge 0
$$
for any function $\vp$ on $\Sigma_0$. By Fredholm alternative (cf. \cite{f-s}, Theorem 1), 
there exists a  positive $\vp>0$ on $\Sigma_0$ satisfying 
$$
\Delta^{\Sigma_0} \vp + {\rm Ric}(N, N)\vp =0.
$$
However,  it follows from the maximum principle
$\vp$ must be a constant which is impossible.
\end{proof}

\begin{lemma}[cf. \cite{Be}] \label{lem2020-7-2-1}
Let $(M^n, g)$ be a Riemannian manifold with constant scalar curvature $s$. Then
\bea
\d d^D z = D^*Dz + \frac{n}{n-2}z\circ z + \frac{s}{n-1}z - {\mathring {\mathcal W}}z - \frac{1}{n-2}|z|^2 g.
%\label{eqn2016-11-29-6-1}
\eea
\end{lemma}

\begin{lemma}\label{cor191}
Let $(g,f)$ be a non-trivial solution of the static vacuum equation  on an $n$-dimensional compact manifold $M$  satisfying  $\o = 0$.   Suppose that  $f^{-1}(a)$ is a hypersurface. If $\dis{\a  + \frac{s}{n} \ge 0}$ on $M$, then
$$
\a + \frac{s}{n} =0
$$
on the whole $M$.
\end{lemma}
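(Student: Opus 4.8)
The plan is to feed the Bach-flatness of $(M,g)$ into the Weitzenb\"ock-type identity of Lemma~\ref{lem2020-7-2-1}, then integrate the resulting tensor identity against $z$ and read off the conclusion from the sign hypothesis $\a+\frac sn\ge 0$.

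First I would use that $(M,g)$ is Bach-flat (Theorem~\ref{lem2020-9-10-1}) and has constant scalar curvature. The latter gives $C=d^Dz$, and (\ref{bach302}) with $B=0$ yields $\d d^Dz=\d C={\mathring{\mathcal W}}z$. Substituting this for the left-hand side of Lemma~\ref{lem2020-7-2-1} cancels one copy of ${\mathring{\mathcal W}}z$ and leaves the purely tensorial relation
\bea
2\,{\mathring{\mathcal W}}z = D^*Dz + \frac{n}{n-2}\,z\circ z + \frac{s}{n-1}\,z - \frac{1}{n-2}\,|z|^2 g.
\eea

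Next I would pair this with $z$ and integrate over the closed manifold $M$. Here the structure of $z$ forced by $T=0$ is essential: since $i_{\n f}z=\a\,df$ and $z(E_i,E_j)=-\frac{\a}{n-1}$ on $N^\perp$, one has $z=\frac{n\a}{n-1}\frac{df}{|\n f|}\otimes\frac{df}{|\n f|}-\frac{\a}{n-1}g$, so that $|z|^2=\frac{n}{n-1}\a^2$ and ${\rm tr}(z^3)=\frac{n(n-2)}{(n-1)^2}\a^3$. Because $z$ is trace-free we get $\langle |z|^2g,z\rangle=0$ and $\langle z\circ z,z\rangle={\rm tr}(z^3)$, while $\int_M\langle D^*Dz,z\rangle=\int_M|Dz|^2$. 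The crucial point is that $\langle{\mathring{\mathcal W}}z,z\rangle\equiv 0$: using ${\mathring{\mathcal W}}g=0$ we have ${\mathring{\mathcal W}}z=\frac{n\a}{n-1}{\mathcal W}_N$, and pairing with $z$ leaves only ${\mathcal W}(N,N,N,N)$ and the Weyl trace $\sum_i{\mathcal W}(E_i,N,E_i,N)$, both of which vanish. The integrated identity therefore collapses to
\bea
0 = \int_M |Dz|^2 + \frac{n^2}{(n-1)^2}\int_M \a^2\left(\a+\frac sn\right).
\eea

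Finally, under the hypothesis $\a+\frac sn\ge 0$ the second integrand is nonnegative, so both integrals must vanish; in particular $\a^2\!\left(\a+\frac sn\right)\equiv 0$, which forces $\a\in\{0,-\frac sn\}$ at every point. As $\a$ is continuous and $M$ is connected, $\a$ is constant, equal to $0$ or to $-\frac sn$. Since $f^{-1}(a)$ is assumed to be a hypersurface, $\a=-\frac sn$ there (the value recorded in (\ref{eqn2019-1210-1-1})), which excludes $\a\equiv 0$; hence $\a\equiv-\frac sn$, i.e.\ $\a+\frac sn\equiv 0$ on all of $M$. The main obstacle I anticipate is the pointwise vanishing of $\langle{\mathring{\mathcal W}}z,z\rangle$, which is exactly what makes the Weyl curvature drop out and turns the Weitzenb\"ock identity into a sign-definite integral; the remaining care is only to note that $\a,N$ are undefined on the measure-zero critical set $f^{-1}(a)\cup f^{-1}(b)$ while $z$ itself is smooth, so the integral manipulations are legitimate.
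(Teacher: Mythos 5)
Your argument is correct and rests on the same two pillars as the paper's proof: the identity of Lemma~\ref{lem2020-7-2-1} and the algebraic structure $z=\frac{n\a}{n-1}\frac{df}{|\n f|}\otimes\frac{df}{|\n f|}-\frac{\a}{n-1}g$ forced by $T=0$, which turn the curvature terms into $\left(\frac{n}{n-1}\right)^2\a^2\left(\a+\frac{s}{n}\right)$. You deviate in two places, both legitimately. First, the paper feeds in $C=0$ and ${\mathring{\mathcal W}}z=0$ (consequences of $T=0$ recorded earlier), so the left-hand side of Lemma~\ref{lem2020-7-2-1} vanishes outright; you use only Bach-flatness via $\d C={\mathring{\mathcal W}}z$ and then kill the surviving Weyl term by the pointwise trace computation $\langle{\mathring{\mathcal W}}z,z\rangle=\frac{n\a}{n-1}\langle\mathcal W_N,z\rangle=0$, which is correct and makes the step need a slightly weaker input. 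Second, the paper stays pointwise: it rewrites the identity as $\frac12\Delta|z|^2=\left(\frac{n}{n-1}\right)^2\a^2\left(\frac{s}{n}+\a\right)+|Dz|^2\ge 0$ and applies the maximum principle to the subharmonic function $|z|^2$ on compact $M$, whereas you integrate; the two routes extract the same information. Your endgame (the integrand forces $\a\in\{0,-\frac{s}{n}\}$ pointwise, continuity and connectedness make $\a$ constant, and the value $\a=-\frac{s}{n}$ on the hypersurface $f^{-1}(a)$ from (\ref{eqn2019-1210-1-1}) excludes $\a\equiv 0$) matches the paper's use of the same boundary value, with the implicit fact $s>0$ guaranteeing the two candidate values are distinct. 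Your closing remark about the critical set is the right caution: the pointwise formulas for $z$ and $\langle{\mathring{\mathcal W}}z,z\rangle$ live off ${\rm Crit}(f)$, but all integrands are continuous on $M$ and the critical set is negligible, so the integration is legitimate.
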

\begin{proof}
We have $T=0, C=0$ and so ${\mathring {\mathcal W}}z = 0$.
So, it follows from  Lemma~\ref{lem2020-7-2-1} that
$$
-\langle D^*Dz, z\rangle = \frac{s}{n-1}|z|^2 + \frac{n}{n-2}\langle z\circ z, z\rangle.
$$
Thus
\be
\frac{1}{2}\Delta |z|^2 = -\langle D^*Dz, z\rangle  + |Dz|^2 = \frac{s}{n-1}|z|^2 + \frac{n}{n-2}\langle z\circ z, z\rangle +|Dz|^2.\label{eqn2020-7-2-2}
\ee
Since $T = 0$, we have
$$
z(N, N) = \a, \quad z(E_i, E_i) = - \frac{\a}{n-1}
$$
and so
$$
|z|^2 = \frac{n}{n-1}\a^2, \quad \langle z\circ z, z\rangle = \a^3 - \frac{1}{(n-1)^2}\a^3 .
$$
Substituting these into (\ref{eqn2020-7-2-2}), we obtain
\bea
\frac{1}{2}\Delta |z|^2  
&=&
 \left(\frac{n}{n-1}\right)^2 \a^2 \left(\frac{s}{n}+ \a\right) +|Dz|^2.
 \eea
 Since $|z|^2$ is a subharmonic function, it is  constant and so is $\a$ because $\dis{|z|^2 = \frac{n}{n-1}\a^2}$.
 Since $\dis{\a = -\frac{s}{n}}$ on $f^{-1}(a)$, we have
 $$
 \a = - \frac{s}{n}
 $$
 on the whole $M$.
\end{proof}

%\begin{remark}
%We would like to mention that under the hypotheses of Lemma~\ref{cor191}, $\alpha$ cannot be zero identically on $M$.
%\end{remark}

\begin{lemma}\label{lem2020-7-2-4-1}
Let $(g,f)$ be a non-trivial solution of the static vacuum equation  on an $n$-dimensional compact manifold $M$  satisfying  $\o = 0$. 
If both $f^{-1}(a)$ and $f^{-1}(b)$ are hypersurfaces, then
$$
\xi \equiv 1
$$
on the whole $M$.
\end{lemma}
\begin{proof}
First, we claim that $\xi =1$ on the set $f^{-1}(0)$. By  Lemma~\ref{lem2020-7-2-4} and Lemma~\ref{cor191}, we have
$$
{\rm Ric}_M(N, N) = \a + \frac{s}{n} = 0\quad \mbox{on $M$}.
$$
Since $f^{-1}(0)$ is totally geodesic, we have 
$$
s_0 = s - 2{\rm Ric}_M(N, N) = s, 
$$
where $s_0$ is the scalar curvature of $f^{-1}(0)$ with respect to the induced metric.
Next, it follows from (\ref{eqn2020-6-28-10}) that
$$
s = s_0 - {2(n-1)}\frac{\xi''}{\xi}  - (n-1)(n-2)\frac{\xi'^2}{\xi^2},
$$
where  $\xi'$ and $\xi''$ denote the derivatives of $\xi$ with respect to $f$ (cf. \cite{Be}). Thus, we obtain
\bea
 {2(n-1)}\frac{\xi''}{\xi}  - (n-1)(n-2)\frac{\xi'^2}{\xi^2}  = 0 %\label{eqn2020-10-19-2}
\eea
on the set $f^{-1}(0)$.
Now from (\ref{eqn2020-6-28-10}) again, it is easy to see that
$$
{\rm Ric}_M (N, N) = - (n-1)\frac{\xi''}{\xi}
$$
on the set $f^{-1}(0)$. Consequently, we have
$$
{\xi''} =0, \quad \xi' = 0
$$
on the set $f^{-1}(0)$.
From (\ref{eqn2020-6-28-11-1}), we have
$$
\a + \frac{s}{n}\xi^2 = -(n-1)|\n f|^2\xi  \cdot \frac{\xi'}{f}
$$
on the set $M \setminus f^{-1}(0)$. By letting $x \to p \in f^{-1}(0)$ and applying L'Hospital rule, we obtain
\bea
\a + \frac{s}{n}\xi^2 =- (n-1)|\n f|^2 \xi \xi'' =0.%\label{eqn2020-7-2-12}
\eea
So, on the set $f^{-1}(0)$, we have
$$
\a + \frac{s}{n}\xi^2  =0 = \a + \frac{s}{n},
$$
which implies that $\xi = 1$ on the set $f^{-1}(0)$.

Now we claim that  $\xi = 1$ on the whole $M$. Since $\dis{\a = - \frac{s}{n}}$ on $M$, we have, from (\ref{eqn2020-6-28-11-1}), 
\be
(n-1)\xi|\nabla f|^2 \xi'= \frac{sf}{n}(1-\xi^2). \label{eqn2020-7-2-10}
\ee
Taking the derivative with respect to $f$, we have
$$
{\mathcal F}(\xi'', \xi') = \frac{s}{n}(1 - \xi^2),
$$
where
$$
{\mathcal F}(\xi'', \xi') = (n-1)|\n f|^2 \xi \xi'' + (n-1)|\n f|^2 (\xi')^2 + (n-1)(|\n f|^2)' \xi \xi' + \frac{2sf}{n}\xi \xi'.
$$
From (\ref{eqn2020-7-2-10}) again, we have
$$
f {\mathcal F}(\xi'', \xi') = \frac{sf}{n}(1 - \xi^2) = (n-1)|\n f|^2 \xi \xi',
$$
i.e.,
\be
{\mathcal F}(\xi'', \xi')-  (n-1)\frac{|\n f|^2 \xi \xi'}{f} =0.\label{eqn2020-7-2-11}
\ee
Applying the maximum principle to $\xi$ in (\ref{eqn2020-7-2-11}) on the set $a < f<0$, we obtain
$$
\sup_{a<f<0} \xi = \sup_{\{f=a\}\cup \{f=0\}} \xi = 1
$$
and 
$$
\inf_{a<f<0} \xi = \inf_{\{f=a\}\cup \{f=0\}} \xi = 1.
$$
Hence $\xi = 1$ on the set $a\le f \le 0$. The same argument shows that
$\xi = 1$ on the set $0 \le f \le b$. Consequently $\xi = 1$ on $M$.
\end{proof}

\section{compact static vacuum spaces with PIC}

In this section, we assume that $(M^n, g,f)$, $n \ge 4$,  is a compact  static vacuum space having positive isotropic curvature such that $\o = df\wedge i_{\n f}z =0$.

\begin{lemma}\label{thm2020-6-18-1}
Let  ${\min_M f = a}$ and suppose that $f^{-1}(a)$ is a hypersurface.  Then  $f^{-1}(a)$ is homeomorphic  to ${\Bbb S}^{n-1}$, up to finite cover.
\end{lemma}
\begin{proof}
Let $\Sigma = f^{-1}(a)$. It follows  from  Theorem~\ref{thm2019-12-23-1} and Theorem~\ref{lem2020-1-7-16}  that $\Sigma$ is a connected, totally geodesic stable minimal hypersurface of $M$ and Einstein with positive  Ricci curvature.
Thus $\Sigma$ also  has positive isotropic curvature with respect to the induced metric for $n \ge 5$.
In particular, since the fundamental group of $\Sigma$ is finite,  $\Sigma$ turns out to be homeomorphic to ${\Bbb S}^{n-1}$ up to finite cover \cite{mm88}.

When $n=4$, Chen-Tang-Zhu \cite{ctz} gave a complete classification on compact $4$-manifolds with PIC. In our case that $f^{-1}(a)$ is hypersurface, since $M$ satisfies the static vacuum equation, $({\Bbb R}\times {\Bbb S}^3)/\Gamma$ is the only one possible choice, 
where $\Gamma$ is a cocompact fixed point free discrete isometric subgroup of the standard ${\Bbb R}\times {\Bbb S}^3$. This completes our proof.
\end{proof}

Combining Theorem~\ref{lem2020-1-7-16}, Lemma~\ref{lem2020-7-2-4-1}  and Lemma~\ref{thm2020-6-18-1}, we obtain the following which is one of our main results.

\begin{theorem}\label{cor2023-4-22-1}
Let $(M^n, g, f), n \ge 4, $  be a $n$-dimensional compact static vacuum space of positive isotropic curvature such that $\o = df\wedge i_{\n f}z =0$. Then $M$ is isometric to a sphere ${\Bbb S}^n$  or a  product ${\Bbb S}^1\times {\Bbb S}^{n-1}$,  up to finite cover.
\end{theorem}

In dimension $3$, we have the  same rigidity as the result holds for $n \ge 4$.

\begin{corollary}\label{cor2023-4-18-1}
Let $(M^3, g, f)$  be a $3$-dimensional compact static vacuum space of positive isotropic curvature such that 
$\o = df\wedge i_{\n f}z =0$. Then $M$ is isometric to a sphere ${\Bbb S}^3$  or a  product ${\Bbb S}^1\times {\Bbb S}^{2}$,  up to finite cover.
\end{corollary}
\begin{proof}
In view of  Theorem~\ref{thm2020-6-18} and Theorem~\ref{lem2020-1-7-16}, we can see that $\Sigma^2$ is a compact Einstein manifold with positive scalar curvature with finite fundamental group. So it is well-known that  $\Sigma$ is just ${\Bbb S}^2$, up to finite cover.
\end{proof}

%Theorem~\ref{thm2020-6-18-1} and Corollary~\ref{cor2023-4-18-1} implies Corollary~\ref{cor2023-4-18-2}.

\section{Appendix}

In this section, we will give a proof for Theorem~\ref{thm2019-12-23-1}.

\vspace{.12in}
\noindent
{\bf Proof of Theorem~\ref{thm2019-12-23-1}.}
% \begin{theorem}\label{thm2019-12-23-1-1}
%Let $(g,f)$ be a non-trivial solution of the static vacuum equation on an $n$-dimensional compact manifold $M$ satisfying  $\o=0$.  Let ${\min_M f = a}$. Then either $f^{-1}(a)$  contains only a single point, or $f^{-1}(a)$ is a totally geodesic stable minimal hypersurface of $M$. The same property also holds for  $f^{-1}(b)$ with ${\max_M f = b}$. Further, if $f^{-1}(a)$ is a single point, then so is $f^{-1}(b)$ and vice versa. In case that $f^{-1}(a)$ is a single point, every level set $f^{-1}(t)$ except $f^{-1}(a)$ and $f^{-1}(b)$ is a hypersurface and is homotopically a sphere ${\Bbb S}^{n-1}$.
%\end{theorem}
%\begin{proof}
Let ${\min_M f = a}$. We may assume $a<0$ from the maximum principle together with
(\ref{eqn2020-1-14}).
By Lemma~\ref{cor191-1} and Morse theory, the inverse set
$f^{-1}(a)$ is a single point, or a hypersurface of $M$ which is homotopically equivalent to a connected component of $f^{-1}(0)$.
In fact, the set $f^{-1}(a)$ consists of more than two points, there should be a critical point of $f$ which is not a minimum point nor maximum point.
By the same reason, if $f^{-1}(a)$ is a single point, then so is $f^{-1}(b)$ and vice versa. 
In fact, if $f^{-1}(a)$ is a single point and $f^{-1}(b)$ is a hypersurface, then we can choose a connected component $\Omega$ whose boundary is $f^{-1}(b)$. Applying the maximum principle to (\ref{eqn2020-1-14}) on the set $\Omega$, we get a contradiction.

Suppose that $f^{-1}(a)$ is a hypersurface. Then 
 for a sufficiently small $\epsilon >0$, $f^{-1}(a+\epsilon)$ has two
  connected components, say $\Sigma_\epsilon^+, \,\, \Sigma_\epsilon^-$.
  Note that $\Sigma_0^+=\Sigma_0^- = f^{-1}(a)$.
    Let $\nu$ be a unit normal vector field on $\Sigma:= f^{-1}(a)$. Then 
  $\nu$ can be extended smoothly to a vector field $\Xi$  defined  on a a tubular neighborhood 
  of $f^{-1}(a)$ such that   $\Xi|_{f^{-1}(a)} = \nu$ and 
  $\Xi|_{\Sigma_\epsilon^+} =  N = \frac{\n f}{|\n f|}$, 
  $\Xi|_{\Sigma_\epsilon^-} =  -N = -\frac{\n f}{|\n f|}$.
 Note that
 $$
 \lim_{\epsilon\to 0+} N = \lim_{\epsilon \to 0-}(-N)  = \nu.
 $$
   Next, on the hypersurface $f^{-1}(a+\epsilon)$ near $f^{-1}(a)$, the Laplacian of $f$ is given by
  $$
  \Delta f = \Delta' f + Ddf(N, N) + m \langle N, \n f\rangle = Ddf(N, N) + m \langle N, \n f\rangle,
  $$
  where  $\Delta'$ and $m$ denote the Laplacian and mean curvature of $f^{-1}(a+\epsilon)$,
  respectively. In particular, on the minimum set $f^{-1}(a)$, we have
  \be
  \Delta f =   Ddf(\nu, \nu). \label{eqn2020-1-7-15}
  \ee
Let $p \in f^{-1}(a)$ be any point.  Choosing an orthonormal basis $\{e_1 = \nu(p), e_2, \cdots, e_n\}$ on $T_pM$,
 it follows from (\ref{eqn2020-1-7-15}) together with $a = \min_M f$  that
\be
f z_p(e_i, e_i) - \frac{sf}{n(n-1)}  = Ddf_p(e_i, e_i)  = 0\label{eqn2020-6-5-1}
\ee
for each $i =2, \cdots, n$. This also shows that, on the minimal set $f^{-1}(a)$, 
\be
\lim_{\e\to 0} z(N, N) = z(\nu, \nu) = - \frac{s}{n}.\label{eqn2024-8-26-1}
\ee
Now we claim that  the minimum set  $\Sigma := f^{-1}(a)$ is totally geodesic,  and in particular the mean curvature is vanishing, $m=0$.
 In fact, fix $i$ $(i=2,3, \cdots, n)$, say $i=2$,  and  let $\gamma : [0, l) \to M$ be a unit speed geodesic such that $\gamma(0) =p\in \Sigma$
and $\gamma'(0) = e_2 \in T_p\Sigma$. Then, on a tubular neighborhood of $\Sigma$,  we have 
\be
D_{\gamma'} N &=&
\langle \gamma', N\rangle N\left(\frac{1}{|\n f|}\right)\n f + \frac{1}{|\n f|}\left[f z(\gamma', \cdot) - \frac{sf}{n(n-1)}\gamma'\right].
\label{eqn2020-6-17-1}
\ee
Note  that
$$
N\left(\frac{1}{|\n f|}\right)  = - \frac{1}{|\n f|^2} \left(f\alpha - \frac{sf}{n(n-1)}\right).
$$
Letting
$$
\gamma' = \langle \gamma', N\rangle N + \gamma'^{\top},
$$
where $\gamma'(t)^{\top}$ is the tangential component of $\gamma'(t)$ to $f^{-1}(\gamma(t))$, 
and substituting these into (\ref{eqn2020-6-17-1}), we obtain
$$
|\n f| D_{\gamma'} N =   f z(\gamma'^\top, \cdot)  - \frac{sf}{n(n-1)} \gamma'^\top.
$$
Taking the covariant derivative in the direction $N$, we have
\bea
&&Ddf(N, N) D_{\gamma'}N + |\n f|D_ND_{\gamma'}N \\
&&\qquad
= 
|\n f| z(\gamma'^\top, \cdot)  + f D_N[z(\gamma'^{\top}, \cdot)] -  \frac{s}{n(n-1)}|\n f| \gamma'^\top - \frac{sf}{n(n-1)}D_N \gamma'^{\top}.
\eea
Letting $t\to 0+$, we obtain
\be
-\frac{sa}{n-1} D_{e_2}\nu = a D_{\nu} [z(\gamma'^\top, \cdot)]\bigg|_p - \frac{sa}{n(n-1)} D_{\nu} \gamma'^\top \bigg|_p \label{eqn2021-1-25-2}
\ee
because the covariant derivative depends only on the point $p$ and initial vector $e_2$.
Now since $z(N, X) = 0$ for $X \perp N$, we may assume that $\{e_i\}_{i=2}^n$ diagonalizes $z$ at the point $p$ so that
\be
z(D_{\nu} \gamma'^\top|_p, e_2) = 0.\label{eqn2021-1-25-3}
\ee

\vspace{.12in}

{\bf Assertion:}\, $\nu(z(\gamma'(t), \gamma'(t))|_{t=0} = 0.$

\vspace{.15in}
\noindent
Define $\vp(t):= f\circ \gamma(t)$. Note that $\vp'(0) =0$ and also $\vp''(0) = Ddf(e_2, e_2) = 0$ by (\ref{eqn2020-6-5-1}).
Since $\Sigma$ is the minimum set of $f$, $\vp'(t)$ is nondecreasing when $\vp(t)$ is sufficiently close to $a = \min f$ and
so $\vp''(t) \ge 0$ for sufficiently small $t>0$. So
\bea
\vp''(t) = Ddf(\gamma'(t), \gamma'(t)) = \vp(t)  z(\gamma'(t), \gamma'(t)) - \frac{s}{n(n-1)}\vp(t) \ge 0 %\label{eqn2021-1-26-1}
\eea
for sufficiently small $t>0$.  However, by (\ref{eqn2020-6-5-1}), we have $z(\gamma'(t), \gamma'(t)) >0$ for sufficiently small $t>0$.
So,
$$
0< z(\gamma'(t), \gamma'(t)) \le \frac{s}{n(n-1)}
$$
for sufficiently small $t$. Defining $\xi(t) = z(\gamma'(t), \gamma'(t)) - \frac{s}{n(n-1)}$, we have
$\xi(t) \le 0$ and $\xi(0) = 0$ by  (\ref{eqn2020-6-5-1}). Thus,
$$
\xi'(0)  = \frac{d}{dt}\bigg|_{t=0} z(\gamma'(t), \gamma'(t)) \le 0.
$$
Now considering a smooth extension of $\gamma$ to the interval $(-\e, 0]$, 
we can see that  $\frac{d}{dt}|_{t=0} z(\gamma'(t), \gamma'(t))$ cannot be negative since $\Sigma$ is the minimum set of $f$. 
In other words, we must have
$$
\frac{d}{dt}\bigg|_{t=0} z(\gamma'(t), \gamma'(t)) = 0,
$$
which proves our assertion.

\vspace{.15in}
\noindent
Let $\{N, E_2, \cdots, E_n\}$ be a local frame around $p$ such that $E_i(p) = e_i$ for $2 \le i$
and $E_2 = \frac{\gamma'^\top}{|\gamma'^\top|}$. Then, for $i \ge 2$,
$$
z(E_i, \cdot) = \sum_{j=2}^n z(E_i, E_j) E_j
$$
and
$$
D_N[z(\gamma'^\top, \cdot)] = D_N \left[z(\gamma'^\top, E_j)E_j\right] = N\left(z(\gamma'^\top, E_j)\right)E_j + z(\gamma'^\top, E_j)D_NE_j.
$$
So,
$$
\langle D_N[z(\gamma'^\top, \cdot)], E_2\rangle  = N\left(z(\gamma'^\top, E_2)\right) + z(\gamma'^\top, E_j) \langle D_NE_j, E_2\rangle.
$$
Note that $z(\gamma', \gamma') = |\gamma'^\top| z(\gamma'^\top, E_2)
 + \langle \gamma, N\rangle^2 \a$ with $\a = z(N, N)$.
Since $|\gamma'^\top|$ attains its maximum at $p$ and $\langle \gamma', N\rangle^2$ attains its minimum at $p$, we have
$$
\nu [z(\gamma', \gamma')]\bigg|_p = |\gamma'^\top|  \nu[z(\gamma'^\top, E_2)]\bigg|_p 
+ \langle \gamma, N\rangle^2 \nu(\a)(p) =  \nu[z(\gamma'^\top, E_2)]\bigg|_p,
$$
which shows that
\be
\nu[z(\gamma'^\top, E_2)]\bigg|_p = 0\label{eqn2021-2-8-1}
\ee
by {\bf Assertion}. Letting $t\to 0+$ and applying  (\ref{eqn2021-2-8-1}),  we obtain
\bea
\langle D_\nu [z(\gamma'^\top, \cdot)], E_2\rangle|_p  &=& \nu\left(z(\gamma'^\top, E_2)\right)|_{t=0} + z(e_2, e_2) \langle D_\nu E_2, E_2\rangle|_p\\
&=&0.
\eea
Thus, by  (\ref{eqn2021-1-25-2}) and (\ref{eqn2021-1-25-3}) again, we have
$$
- \frac{sa}{n-1} \langle D_{e_2}\nu, e_2\rangle = a \langle D_{\nu}[z(\gamma'^\top, \cdot)], E_2\rangle|_p - \frac{sa}{n(n-1)} \langle D_{\nu} E_2, E_2\rangle|_p =0.
$$
Since $i=2$ is, in fact, arbitrary, we have 
$$
\langle D_{e_i}\nu, e_i\rangle = 0
$$
for any $i \ge 2$. Hence,  the square norm of the second fundamental form $A$ is given by
$$
|A|^2(p) = \sum_{i=2}^n  \left|\left(D_{e_i}e_i\right)^\perp\right|^2 = \sum_{i=2}^n \langle D_{e_i}\nu, e_i\rangle^2 = 0,
$$
which shows $f^{-1}(a)$ is totally geodesic.

Finally, from (\ref{eqn2020-1-7-15}) together with the static vacuum equation (\ref{eqn2020-1-7-11}), we have
 $$
  -\frac{sf}{n-1} = Ddf(\nu, \nu) = fz(\nu, \nu) - \frac{sf}{n(n-1)}.
  $$
That is, 
\bea
 z_p(\nu, \nu) = - \frac{sf}{n f} = - \frac{s}{n} %\label{eqn2019-1210-1-1}
  \eea
on the set $\Sigma = f^{-1}(a)$ as mentioned above. Thus, 
$$
r(\nu, \nu) = {\rm Ric}(\nu, \nu) =   0
$$
and obviously the stability operator for hypersurfaces with vanishing second fundamental form becomes
$$
\int_{\Sigma} |\n \vp|^2  \ge 0
$$
for any function $\vp$ on $\Sigma$. 
Consequently, $\Sigma = f^{-1}(a)$ is a totally geodesic stable minimal hypersurface of $M$.
The very similar argument shows that the same property holds for
$f^{-1}(b)$ with $\max_M f = b$.
Final arguments  follow from Lemma~\ref{cor191-1}. \hfill{\Large$\Box$}
%\end{proof}

\vspace{.12in}
\noindent
{\bf Acknowledgement} 
The first and second authors were supported by the Basic Science Research Program through the National Research Foundation of Korea(NRF)  funded by the Ministry of Education (NRF-2018R1D1A1B05042186) and (RS-2024-00334917), respectively.

\end{document}